\newtheorem{thm}{Theorem}[section]
\newtheorem{dummy}{Theorem}
\newtheorem{dummyy}{Corollary}
\newtheorem{dummyyy}{Conjecture}
\newtheorem{prop}[thm]{Proposition}
\newtheorem{prob}[thm]{Problem}
\newtheorem{lem}[thm]{Lemma}
\newtheorem{conj}[thm]{Conjecture}
\newtheorem{cor}[thm]{Corollary}
\newtheorem{clm}[thm]{Claim}
\theoremstyle{definition}
\newtheorem{ex}[thm]{Example}
\theoremstyle{remark}
\newtheorem{rem}[thm]{Remark}
\newcommand{\R}{\mathbb{R}}
\newcommand{\N}{\mathbb{N}}
\newcommand{\Z}{\mathbb{Z}}
\numberwithin{equation}{section}
\title{Annulus twist and diffeomorphic 4-manifolds}
\author{Tetsuya Abe} 
\address{Research Institute for Mathematical Sciences, 
Kyoto University, Kyoto 606-8502, Japan} 
\email{tetsuya@kurims.kyoto-u.ac.jp}
\thanks{The first author was partially supported by KAKENHI,
Grant-in-Aid for Research Activity start-up (No. 00614009),
Japan Society for the Promotion of Science.}
\author{In Dae Jong}
\address{Faculty of Liberal Arts and Sciences, Osaka Prefecture University, 
1-1 Gakuen-cho, Nakaku, Sakai, Osaka 599-8531, Japan} 
\email{jong@las.osakafu-u.ac.jp}
\author{Yuka Omae} 
\address{Osaka Prefectural Kitano High School,
Osaka 532-0025,
Japan} 
\email{T-OmaeYu@medu.pref.osaka.jp}
\author{Masanori Takeuchi} 
\address{Department of Mathematics, Graduate School of Science, Osaka University, 
Toyonaka, Osaka 560-0043, Japan}
\keywords{Annulus twist; Band presentation; Carving; Dehn surgery; Kirby calculus; Homotopy $4$-ball; Slice knot; Unknotting number one knot}
\begin{document}

\maketitle

\begin{abstract}
We give a method for obtaining infinitely many framed knots which represent a diffeomorphic $4$-manifold. 
We also study a relationship between the $n$-shake genus and the 4-ball genus of a knot.
Furthermore we give a construction of homotopy $4$-spheres from a slice knot with unknotting number one.
\end{abstract}

\section{Introduction}\label{sec:intro}
In Kirby's problem list~\cite{Kirby}, Clark asked the following. 
\begin{prob}[{\cite[Problem 3.6 (D)]{Kirby}}]\label{prob:3}
Is there a $3$-manifold which can be obtained by $n$-surgery on infinitely many mutually distinct knots? 
\end{prob}
There are many related studies on Problem~\ref{prob:3} (e.g., see \cite{Brakes}, \cite{Gompf2}, \cite{Lickorish2}, \cite{Livingston}, \cite{Teragaito3}). 
In 2006, Osoinach \cite{Osoinach} proved that 
there exists a hyperbolic $3$-manifold which is obtained by $0$-surgery on infinitely many mutually distinct knots. 
He also constructed such a toroidal $3$-manifold. 
In particular, he solved Problem~\ref{prob:3} for $n=0$.
The key tool to produce infinitely many (mutually distinct) knots was an operation which we will call an annulus twist.
By the same method, Teragaito~\cite{Teragaito} proved that 
there exists a Seifert fibered $3$-manifold which is obtained by $4$-surgery on infinitely many mutually distinct knots.
In particular, he solved Problem~\ref{prob:3} for $n=4$.　
It is known that Problem~\ref{prob:3} is true for $n = 4m$ $(m \in \Z)$ by Kouno~\cite{Kouno}. 
For a recent study, see \cite{Teragaito2}.

For a knot $K$ in the 3-sphere $S^3= \partial D^4$ and $n \in \Z$, 
we denote by $M_{K}(n)$ the $3$-manifold obtained by $n$-surgery on $K$
and by $X_{K}(n)$ the smooth $4$-manifold obtained from the 4-ball $D^4$
by attaching a $2$-handle along $K$ with framing $n$. 
Note that $\partial X_{K}(n) \approx M_{K}(n)$,
where we use the notation ``$X \approx Y$'' when two manifolds $X$ and $Y$ are diffeomorphic. 
As a 4-dimensional analogue of Problem~\ref{prob:3}, the following is natural to ask. 

\begin{prob}\label{prob:4}
Let $n$ be an integer. 
Find infinitely many mutually distinct knots $K_1$, $K_2, \cdots$ 
such that $X_{K_i}(n) \approx X_{K_j}(n) $ for each $i, j \in \N$. 
\end{prob}

Currently less is known about Problem~\ref{prob:4}. 
We review known results on Problem~\ref{prob:4}. 
Due to Akbulut~\cite{Akbulut2, Akbulut1}, 
there exists a pair of distinct knots $K_n$ and $K_n'$ such that 
$X_{K_{n}}(n) \approx X_{K'_{n}}(n)$ for each $n \in \Z$. 
After Akbulut's works, the fourth author~\cite{Takeuchi1} solved Problem~\ref{prob:4} for $n=4$.
Indeed, let $K_1$, $K_2, \cdots$ be the knots
constructed by Teragaito in \cite{Teragaito}.
Then  $\partial X_{K_i}(4) \approx \partial X_{K_j}(4) $ for each $i, j \in \N$.
The fourth author proved that 
a diffeomorphism $f : \partial X_{K_i}(4) \to \partial X_{K_j}(4)$
extends to a diffeomorphism $\tilde{f} : X_{K_i}(4) \to X_{K_j}(4)$ such that 
$\tilde{f} |_{\partial X_{K_i}(4)}=f$ 
using carving techniques (Lemma~\ref{lem:extend}) introduced by Akbulut in \cite{Akbulut1}. 
Subsequently, the third author~\cite{Omae1} solved Problem~\ref{prob:4} for $n=0$.

In this paper, 
we generalize these results and give a framework to solve Problem~\ref{prob:4} for $n=0$ or $\pm 4$. 
To state our main theorem,
we introduce a band presentation of a knot as follows:
Let $A \subset \R^2 \cup \{ \infty \} \subset S^3$ be a standardly embedded annulus
with a $\varepsilon$-framed unknot $c$ in  $S^3$ as shown in the left side of Figure~\ref{fig:Def-BP}, where $\varepsilon = \pm 1$. 
Take an embedding of a band $b$: $I \times I \to S^3$ such that 
\begin{itemize}
\item $b(I \times I) \cap \partial A = b(\partial I \times I)$, 
\item $b(I \times I) \cap \text{int} A$ consists of ribbon singularities, and
\item $b(I \times I)  \cap c= \emptyset$,
\end{itemize}
where $I = [0,1]$. 
If a knot $K$ is equivalent to the knot $\left( \partial A \setminus b(\partial I \times I)\right) \cup b( I \times \partial I)$
in $M_{c}(\varepsilon)$, 
then we say that $K$ admits a \textit{band presentation} $(A,b,c,\varepsilon)$\footnote{Winter~\cite{Winter} also introduced the notion of a band presentation in the study of ribbon 2-knots. 
A band presentation introduced in the current paper is different from Winter's one. }. 
Note that $M_{c}(\varepsilon) \approx S^3$. 
A typical example of a band presentation of a knot is given in Figure~\ref{fig:Def-BP}.

\begin{figure}[htb]
\includegraphics[width=1.0\textwidth]{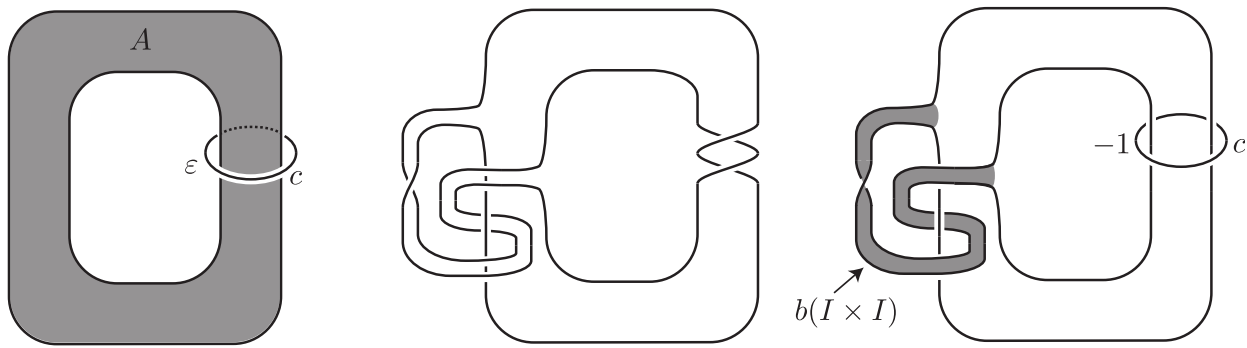}
\caption{The knot depicted in the center admits a band presentation as shown in the right side.}
\label{fig:Def-BP}
\end{figure}
\begin{figure}[htb]
\includegraphics[width=.1\textwidth]{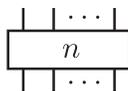}
\caption{$n$ times right (resp.~left)-handed full twists for $n>0$ (resp.~$n<0$). }
\label{fig:FullTwists}
\end{figure} 
\begin{figure}[htb]
\includegraphics[width=1.0\textwidth]{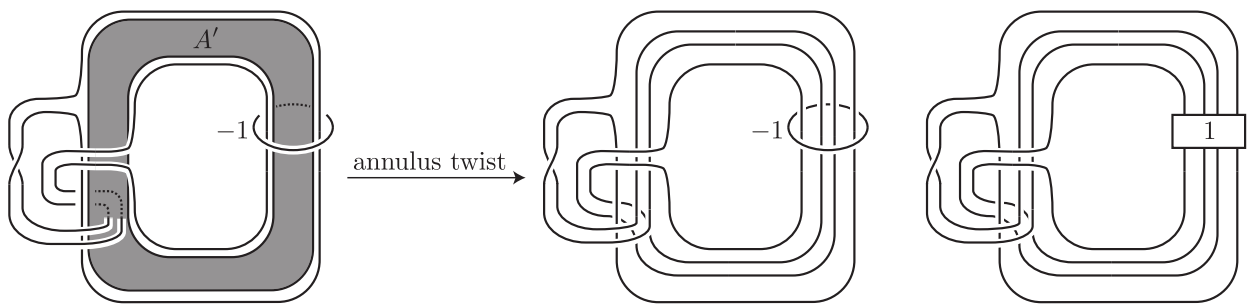}
\caption{The associated annulus $A'$, an annulus twist, and the resulting knot.}
\label{fig:Ex-AT}
\end{figure} 

We explain how to obtain  infinitely many knots  from a knot admitting  a band presentation.
We represent $n$ times full twists by the rectangle labelled $n$ as shown in Figure~\ref{fig:FullTwists}.
For any knot $K$ admitting a band presentation, 
we can find an annulus $A'$ and twist the knot along $A'$ as shown in Figure~\ref{fig:Ex-AT}. 
The resulting knot is called the knot obtained from $K$ by applying  an annulus twist.
Similarly, we define the knot obtained from $K$ by applying  an annulus twist $n$ times. 
For precisely, see Subsection~\ref{subsec:AnnulusTwist}. 
Note that, for a knot admitting a band presentation, we can associate a framing  (called the induced framing)
in a natural way (see Subsection~\ref{subsec:Band}).
Our main result is the following.

\renewcommand{\thedummy}{\ref{thm:main}}
\begin{dummy}  
Let $K$ be a knot admitting a band presentation, 
$K_{n}$ the knot obtained from $K$ by applying an annulus twist $n$ times for $n \in \Z$.
Then $X_{K}(\gamma) \approx X_{K_{n}}(\gamma)$,
 where $\gamma$ is the induced framing from the band presentation. 
\end{dummy}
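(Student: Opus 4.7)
The plan is to realize the annulus twist as a sequence of elementary Kirby moves on a handle decomposition of $X_{K}(\gamma)$ in which the auxiliary unknot $c$ is kept visible. Using the band presentation $(A,b,c,\varepsilon)$, I would exhibit $X_{K}(\gamma)$ as $D^{4}$ together with a 2-handle $h_{c}$ attached along the $\varepsilon$-framed unknot $c$ and a 2-handle $h_{K}$ attached along the knot $\bigl(\partial A\setminus b(\partial I\times I)\bigr)\cup b(I\times \partial I)$ with framing $\gamma$. Because $\varepsilon=\pm 1$, the handle $h_{c}$ is cancellable with a 3-handle, confirming $M_{c}(\varepsilon)\approx S^{3}$, but I would not cancel it: its presence is precisely what allows the twist along the associated annulus $A'$ to be absorbed by handle slides.

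\textbf{Realising one annulus twist.} It suffices to prove $X_{K}(\gamma)\approx X_{K_{1}}(\gamma)$ and then iterate. A single twist along $A'$ is a Dehn twist supported in a regular neighborhood of $A'$, and by the standard equivalence between a Dehn twist along an annulus in a 3-manifold and simultaneous $(+1,-1)$-Dehn surgery on its two boundary curves, one can realise it by introducing two auxiliary $\pm 1$-framed 2-handles along the two components $c_{1}',c_{2}'$ of $\partial A'$, sliding $h_{K}$ (and, where necessary, $h_{c}$) over these auxiliary handles to effect the twist on the attaching circle, and then blowing the auxiliary handles back down. Each of these moves is a diffeomorphism of the 4-manifold, so the net result is a new handle decomposition of the \emph{same} 4-manifold in which $h_{K}$ has been replaced by a 2-handle attached along $K_{1}$.

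\textbf{Framings and main obstacle.} The hard part is the framing bookkeeping. The induced framing $\gamma$ is defined so that the blow-down formula, together with the cancellation of linking numbers of $K$ with the parallel curves $c_{1}'$ and $c_{2}'$, returns the same integer framing on $K_{1}$ as was attached to $K$; verifying this identification is the main technical content. One must also check that the handle slides do not disturb the framing $\varepsilon$ of $c$, since any change there would ruin the identification $M_{c}(\varepsilon)\approx S^{3}$ that underlies the whole setup. The conditions on the band $b$ (missing $c$ entirely and meeting $A$ only in ribbon singularities) are what keep this interaction under control. Once the $n=1$ case is established, iterating the argument $n$ times yields $X_{K}(\gamma)\approx X_{K_{n}}(\gamma)$ for all $n\in\Z$.
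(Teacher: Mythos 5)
The central claim of your second paragraph --- that each move in the sequence ``introduce $\pm 1$-framed $2$-handles along $c_1',c_2'$, slide $h_K$ over them, then blow the auxiliary handles back down'' is a diffeomorphism of the $4$-manifold --- is false, and it is precisely where the difficulty of the theorem lives. Introducing a $\pm 1$-framed $2$-handle is a blow-up (connected sum with $\pm\mathbb{CP}^2$, hence later removable without changing the manifold) only when its attaching circle is an unknot \emph{split} from the rest of the diagram; here $c_1'$ and $c_2'$ bound disks through which the band of $K$ passes --- if they did not, the annulus twist would not change $K$ at all --- and after the slides that effect the twist the auxiliary curves again link $K_1$, so they cannot be blown down within the same diffeomorphism type. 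What the simultaneous $(+1,-1)$-surgery on $\partial A'$ honestly yields is Osoinach's $3$-dimensional statement, namely the boundary diffeomorphism $M_{K}(\gamma)\approx M_{K_n}(\gamma)$ of Proposition~\ref{prop:3-diffeo}. Passing from a diffeomorphism of boundaries to a diffeomorphism of the traces is not automatic (in general there exist knots with diffeomorphic $0$-surgeries whose $0$-traces are not diffeomorphic), so no purely formal handle-move argument of this shape can succeed; if it could, Theorem~\ref{thm:main} would be an immediate corollary of Proposition~\ref{prop:3-diffeo} and the paper would not need Lemma~\ref{lem:extend}.

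The missing idea is the extension step, which is the actual content of the paper's proof. One records the boundary diffeomorphism $f$ produced by the Kirby calculus (a blow-up, handle slides, the annulus twist, and a blow-down --- the annulus-twist step preserves only the boundary), tracks a meridian $\mu$ of the $2$-handle through this calculus, checks that $f(\mu)$ is a $0$-framed unknot in the diagram for $X_{K_n}(\gamma)$, and verifies that the diagram consisting of the $2$-handle along $K_n$ together with $f(\mu)$ dotted (carved) is a canceling pair, hence represents $D^4$. Akbulut's carving lemma (Lemma~\ref{lem:extend}, which rests on Cerf's theorem that every self-diffeomorphism of $S^3$ extends over $D^4$) then extends $f$ to a diffeomorphism $X_{K}(\gamma)\approx X_{K_n}(\gamma)$. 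Your framing discussion touches a genuine but secondary point --- the paper deals with $\gamma=\pm 4$ by a preliminary blow-up converting the framing to $0$ --- but without the meridian-tracking and carving verification your argument proves only the $3$-manifold statement, not the theorem.
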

The key to prove Theorem~\ref{thm:main} is also carving techniques introduced in \cite{Akbulut1}.
We note that Theorem~\ref{thm:main} gives a general answer to \cite[Excerise 15]{AkbulutBook}.
As a corollary of Theorem~\ref{thm:main}, we obtain the following.

\renewcommand{\thedummyy}{\ref{cor:problem}}
\begin{dummyy} 
For each $\gamma \in \{0, \pm 4\}$, there exist infinitely many mutually distinct knots $K_1, K_2, \cdots$
such that  $X_{K_{i}}(\gamma) \approx X_{K_{j}}(\gamma)$ for each $i, j \in \N$. 
\end{dummyy}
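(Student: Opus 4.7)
The plan is to combine Theorem~\ref{thm:main} with the infinite families of knots already produced by Osoinach~\cite{Osoinach} and Teragaito~\cite{Teragaito}. Osoinach exhibited a knot $K$ together with a sequence $K_1, K_2, \ldots$ of mutually distinct knots, obtained by iterating an annulus twist on $K$, all sharing the same $0$-surgery. Teragaito did the analogous thing with $4$-surgery. The mutual distinctness in those two papers is already established via hyperbolic volume and the Seifert fibered structure respectively, so for this corollary I take these inputs as given.

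First I would exhibit an explicit band presentation $(A, b, c, \varepsilon)$ for Osoinach's knot and for Teragaito's knot, and verify that the induced framings are $0$ and $4$ respectively. This is a diagrammatic matter: redraw each knot in the form appearing on the right side of Figure~\ref{fig:Def-BP}, read off the band $b$ and the annulus $A$, and compute the Seifert-type push-off to recover the target integer. Concretely, I expect Osoinach's knot to correspond to $\varepsilon = -1$ with induced framing $0$, and Teragaito's knot to $\varepsilon = +1$ with induced framing $4$.

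The second step is to match the abstract annulus twist of Subsection~\ref{subsec:AnnulusTwist} with the annulus twist operations performed in~\cite{Osoinach} and~\cite{Teragaito}. Since both operations amount to $\pm 1$-surgery on the associated annulus, this reduces to checking that the annulus $A'$ appearing in Figure~\ref{fig:Ex-AT} sits inside the published diagrams in the expected way. Once this identification is made, the family $\{K_n\}$ produced by applying the annulus twist $n$ times agrees, up to reindexing, with the published sequences, and Theorem~\ref{thm:main} gives $X_K(\gamma) \approx X_{K_n}(\gamma)$ for every $n \in \Z$, settling the cases $\gamma = 0$ and $\gamma = 4$.

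For $\gamma = -4$, I would take the mirror image of Teragaito's family. Mirroring reverses the sign of $\varepsilon$ and the induced framing simultaneously, while preserving mutual distinctness, so Theorem~\ref{thm:main} applied to the mirrored band presentation yields the required infinite family with $\gamma = -4$. The main obstacle is the first step: finding and verifying band presentations whose induced framings really equal $0$ and $\pm 4$, and confirming that the two flavors of annulus twist agree on the nose. Once that diagrammatic matching is in place, Theorem~\ref{thm:main} carries all of the $4$-dimensional content of the corollary.
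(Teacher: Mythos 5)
Your plan is sound for $\gamma=\pm 4$, but it has a genuine gap at $\gamma=0$. A band presentation in this paper's sense is not merely ``annulus plus band'': it requires the $\varepsilon$-framed unknot $c$ encircling the annulus as in Figure~\ref{fig:Def-BP}, with the knot living in $M_{c}(\varepsilon)$, and the curve $c$ is exactly what makes the proof of Theorem~\ref{thm:main} run (the blown-up $\mp 1$-framed unknot that slides over $K_n$ in the Kirby calculus comes from $c$). Osoinach's construction in \cite{Osoinach} is an annulus twist on a plain annulus presentation with no such curve $c$, and you give no argument that his specific knot and annulus can be redrawn in the required configuration; so the step you yourself flag as ``a diagrammatic matter'' is precisely the unverified, load-bearing step, and the $\gamma=0$ case remains open in your write-up. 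The paper avoids this entirely: for $\gamma=0$ it does not use Osoinach's knots at all, but takes its own knots $J_m$ ($m$ odd) with the explicit band presentation of Example~\ref{ex:Jn}, and then it must prove mutual distinctness of the twisted family itself. It does so by showing the augmented $3$-component link $L=J_m\cup c_1\cup c_2$ is hyperbolic (nonzero Conway polynomial, hence non-split and irreducible exterior; atoroidal and non--Seifert fibered by Teragaito's argument) and then applying Thurston's hyperbolic Dehn surgery theorem together with Neumann--Zagier volume monotonicity \cite{Neumann}, discarding finitely many initial knots. Your proposal contains no distinctness argument for any $\gamma=0$ family to which Theorem~\ref{thm:main} actually applies.

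For $\gamma=4$ your route is essentially workable, since Teragaito's knot is the paper's $J_2$ and its band presentation is already supplied by Example~\ref{ex:Jn} and Figure~\ref{fig:Jm}; note, however, that your sign guess is impossible: by the framing rule of Subsection~\ref{subsec:Band}, induced framing $+4$ forces $\varepsilon=-1$, not $\varepsilon=+1$. Even here the paper re-derives distinctness through the augmented link (citing only Teragaito's hyperbolicity proposition) rather than quoting his distinctness statement, which makes the argument uniform in $m$. Your mirror trick for $\gamma=-4$ is correct and genuinely different from the paper, which instead uses the second band presentation of $J_m$ (right side of Figure~\ref{fig:Jm}, $m$ odd, $\varepsilon=+1$, framing $-4$): orientation reversal carries $X_{K}(4)$ to $X_{\overline{K}}(-4)$ and preserves distinctness, so no new band presentation is needed for that case. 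To repair the proposal, replace Osoinach's family by $J_1$ (or odd $J_m$) with the paper's band presentation and supply the hyperbolicity/volume argument for distinctness; as the remark following Corollary~\ref{cor:problem} points out, the knots obtained from $J_1$ are all fibered of genus two, so genus cannot distinguish them and some such geometric invariant is unavoidable.
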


It is important to ask which knot admits a band presentation.
The set of knots admitting band presentations contains all unknotting number one knots (Lemma~\ref{lem:band-presentation}). 
Thus the set is relatively large in this sense. 
Furthermore this observation enables us to construct homotopically slice knots and homotopy 4-spheres from a slice knot with unknotting number one (Proposition~\ref{prop:un2}).

\bigskip

In studies on smooth structures of a $4$-manifold, 
Akbulut \cite{Akbulut1} introduced the {\it $n$-shake genus} $g_{s}^{n} (K)$ of a knot $K$, 
which is defined as the minimal genera of closed smooth surfaces in $X_{K}(n)$
such that the homology class of each closed surface generates $H_2(X_{K}(n))$. 

In this paper, we also study a relationship between the $n$-shake genus $g_{s}^{n} (K)$ and the $4$-ball genus $g_{*}(K)$ of a knot $K$.
It is easy to see that $g_{s}^{n} (K) \le g_{*}(K)$. 
Akbulut and Kirby \cite[Problem~1.41 (A)]{Kirby} asked whether $g_s^0(K) = g_{*}(K)$, 
and noted that these probably do not coincide. 
One of our motivations is to find a knot $K$ with $g_{s}^{0} (K) < g_{*}(K)$. 
The following is an immediate consequence of  Theorem~\ref{thm:main}.

\renewcommand{\thedummyy}{\ref{cor:ShakeGenus}}
\begin{dummyy} 
Let $K$ be a knot with a band presentation whose induced framing is $0$, 
$K_{n}$ the knot obtained from $K$ by applying an annulus twist $n$ times. 
Then, for each $i, j \in \Z$, 
$$g_{s}^0(K_{i})=g_{s}^0({K_{j}}) \, .$$ 
\end{dummyy}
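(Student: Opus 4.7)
The plan is to deduce the corollary immediately from Theorem~\ref{thm:main}, together with the (essentially tautological) fact that the $0$-shake genus depends only on the diffeomorphism type of the trace $X_K(0)$. First I would invoke Theorem~\ref{thm:main} with $\gamma = 0$: since the band presentation of $K$ has induced framing $0$ by hypothesis, each iterated annulus twist $K_n$ satisfies $X_K(0) \approx X_{K_n}(0)$, and therefore $X_{K_i}(0) \approx X_{K_j}(0)$ for every pair $i, j \in \mathbb{Z}$.

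Next I would verify that $g_s^0$ is an invariant of the diffeomorphism type of $X_K(0)$. By definition, $g_s^0(K)$ is the minimum of the genera of closed smoothly embedded surfaces $\Sigma \subset X_K(0)$ with $[\Sigma]$ a generator of $H_2(X_K(0)) \cong \mathbb{Z}$. A diffeomorphism $\Phi : X_{K_i}(0) \to X_{K_j}(0)$ carries such a surface to an embedded closed surface of the same genus, and the induced isomorphism $\Phi_* : H_2(X_{K_i}(0)) \to H_2(X_{K_j}(0))$ sends a generator to $\pm$ a generator, which still represents a generator. Hence $g_s^0(K_j) \le g_s^0(K_i)$, and applying the same argument to $\Phi^{-1}$ gives the reverse inequality.

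Combining the two steps yields $g_s^0(K_i) = g_s^0(K_j)$ for all $i, j \in \mathbb{Z}$. There is essentially no obstacle to overcome here; all the genuine content has already been packaged into Theorem~\ref{thm:main}, and the role of this proof is merely to observe that the shake genus descends to an invariant of the ambient trace $4$-manifold.
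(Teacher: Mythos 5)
Your proposal is correct and matches the paper's treatment: the paper states this corollary as an immediate consequence of Theorem~\ref{thm:main} with $\gamma = 0$, the only implicit content being exactly what you spell out, namely that $g_s^0$ is an invariant of the diffeomorphism type of the trace $X_K(0)$ (with the minor observation, which you correctly handle, that a diffeomorphism may send a generator of $H_2$ to its negative, which is still a generator). Nothing is missing.
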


With the notation of Corollary~\ref{cor:ShakeGenus}, 
if  $g_{*}(K) \ne g_{*}(K_{n})$ for some $n \in \Z$,
then we see that $g_{s}^{0} (K_n) < g_{*}(K_n)$.
However, according to some observations we would propose the following.

\renewcommand{\thedummyyy}{\ref{conj:AJ}}
\begin{dummyyy} 
Let $K$ be a knot with a band presentation whose induced framing is $0$, 
and $K_{n}$ the knot obtained from $K$ by applying an annulus twists $n$ times. 
Then $g_{*}(K)=g_{*}({K_{n}})$.  
\end{dummyyy}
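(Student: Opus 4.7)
The plan is to try to prove the stronger statement that $K$ and $K_{n}$ are smoothly concordant, since the smooth $4$-ball genus is a concordance invariant and this would immediately yield $g_{*}(K) = g_{*}(K_{n})$. This is a natural strategy because Theorem~\ref{thm:main} already gives the diffeomorphism $X_{K}(0) \approx X_{K_{n}}(0)$, and what is missing is a relationship between $K$ and $K_{n}$ inside $S^{3} \times [0,1]$ or inside $D^{4}$ rather than merely at the level of the full $2$-handlebody.

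The starting point is the description of the $n$-fold annulus twist as Dehn surgery: if $c_1 \cup c_2 = \partial A'$ denotes the pair of (unknotted) boundary circles of the twisting annulus, then applying the annulus twist $n$ times is equivalent to performing $(1/n)$-surgery on $c_1$ and $(-1/n)$-surgery on $c_2$, and this pair of rational surgeries returns $S^{3}$. I would then consider the trace $W$ of these surgeries, obtained from $S^{3} \times [0,1]$ by appropriate handle attachments (after Rolfsen-twisting the rational framings to integer ones). The knot $K$ lies in $S^{3} \times \{0\}$ disjoint from $c_1 \cup c_2$, and the image of $K$ on the upper boundary is, by construction, $K_{n}$. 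The key step would be to produce a smoothly embedded annulus in $W$ with boundary $K \sqcup K_{n}$ and then to verify that $W \approx S^{3} \times [0,1]$ rel boundary in a way compatible with this annulus, yielding the desired concordance in $S^{3} \times [0,1]$.

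The main obstacle is this last step. Even though $\partial W = S^{3} \sqcup S^{3}$, realizing a product structure on $W$ compatibly with a chosen embedded annulus is delicate, because the Kirby moves that straighten $W$ into $S^{3} \times [0,1]$ force the strands of $K$ passing through disks bounded by $c_1$ and $c_2$ to accumulate crossings that may not be removable by an ambient isotopy. A backup strategy is to transport a slice surface $F \subset D^{4}$ for $K$ through the carving-based diffeomorphism $X_{K}(0) \approx X_{K_{n}}(0)$ of Theorem~\ref{thm:main}; however, that diffeomorphism does \emph{not} preserve the $D^{4}$ sitting inside the $2$-handlebody, so one must understand how the carving moves act on properly embedded surfaces and not merely on $4$-manifolds. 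Finally, as supporting (or potentially disproving) evidence, I would compute concordance invariants such as the signature, the Rasmussen $s$-invariant, and the Ozsv\'ath--Szab\'o $\tau$-invariant for explicit band presentations and their annulus twists; if these remain constant under annulus twist (which I would expect from the Kirby-calculus picture), it would both corroborate the conjecture and suggest which lower bounds on $g_{*}$ should match across the family.
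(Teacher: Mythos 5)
This statement is Conjecture~\ref{conj:AJ}: the paper does not prove it, and offers only a conditional plausibility argument. Its ``evidence'' is exactly this: every knot admitting a band presentation satisfies $g_{*}\le 1$ (it is unknotted by two band surgeries); Proposition~\ref{prop:3-diffeo} together with Lemma~\ref{lem:homotopy-4ball} shows that if one of $K$, $K_{n}$ is slice then the other is \emph{homotopically} slice; hence equality $g_{*}(K)=g_{*}(K_{n})$ would follow from the smooth Poincar\'e conjecture in dimension four, but is otherwise open. So your plan should be measured against that benchmark: an unconditional proof of smooth concordance of $K$ and $K_{n}$ would be substantially stronger than anything the paper claims, and in particular would show that the homotopy $4$-balls of Propositions~\ref{prop:un} and~\ref{prop:un2} contain no exotica detectable this way --- it would resolve precisely the SPC4-flavored difficulty the conjecture is designed to acknowledge.

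The concrete gap is the step you yourself flag as the ``key step,'' and it fails for an identifiable reason: the trace $W$ of the $(1/n)$- and $(-1/n)$-surgeries on $c_{1}\cup c_{2}$ is not $S^{3}\times[0,1]$. Rational surgeries have no $2$-handle trace; after Rolfsen twisting to integral framings the cobordism you actually build is a sequence of blow-ups, so $W$ has $b_{2}\neq 0$ (already in the one-twist case it is a punctured $\mathbb{CP}^{2}\#\overline{\mathbb{CP}}^{2}$), and no product structure on $W$, rel boundary or otherwise, can exist. The embedded annulus from $K$ to $K_{n}$ in $W$ therefore lives in the wrong cobordism and yields no concordance; squeezing it into $S^{3}\times[0,1]$ \emph{is} the open problem, not a verification. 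Your fallback --- transporting a slice surface through the diffeomorphism of Theorem~\ref{thm:main} --- runs into the same wall the paper is organized around: the carving diffeomorphism does not preserve the $D^{4}$ inside $X_{K}(0)$, which is exactly why the paper gets only the shake-genus equality of Corollary~\ref{cor:ShakeGenus} and not $4$-ball genus equality. Finally, your proposed numerical evidence is partly vacuous: $\sigma$ and $\Delta_{K}(t)$ are determined by $M_{K}(0)$ (see the paper's remark after Proposition~\ref{prop:3-diffeo}), so their constancy is automatic and supports nothing; computations of $\tau$ or the Rasmussen $s$-invariant would be genuine evidence, but they are not carried out and are not known to be invariants of $0$-surgery, so at present your argument establishes neither the conjecture nor the concordance strengthening.
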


We note that the situation is entirely different for the $n$-shake genus with $n \ne 0$.
In fact, Akbulut~\cite{Akbulut2, Akbulut1} and the third author~\cite{Omae1}  gave a knot $K$ such that $g_{s}^{n} (K) < g_{*}(K)$ for each $n \ne 0$ (see also \cite{Lickorish}). 
We give infinitely many such knots as follows.

\renewcommand{\thedummyy}{\ref{cor:nShakeGenus}}
\begin{dummyy}
For each integer  $n \ne 0$, 
there exist infinitely many knots $K_1, K_2, \dots$ such that 
$ g_{s}^{n}(K_i) < g_{*}(K_i)$ for any $i \in \N$. 
\end{dummyy}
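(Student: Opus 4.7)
The plan is to bootstrap from a single known example via Theorem~\ref{thm:main}.  By the work of Akbulut \cite{Akbulut2, Akbulut1} and the third author \cite{Omae1}, there is a knot $K$ with $g_s^n(K) < g_*(K)$ for each $n \ne 0$.  For a fixed $n \ne 0$, the first step is to realize $K$ as admitting a band presentation whose induced framing equals $n$; this is a concrete combinatorial task, and one can adjust the framing of the auxiliary curve $c$ together with the band $b$ to hit any desired integer.  Let $K_i$ denote the knot obtained from $K$ by applying the annulus twist $i$ times with respect to this presentation.

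By Theorem~\ref{thm:main}, $X_{K_i}(n) \approx X_K(n)$ for every $i \in \Z$.  Since the $n$-shake genus depends only on the diffeomorphism type of $X_{K_i}(n)$ together with the generator of $H_2$, we deduce that $g_s^n(K_i) = g_s^n(K)$ for all $i$.  It then remains to verify (i) the family $\{K_i\}$ contains infinitely many pairwise distinct knots, and (ii) each such $K_i$ satisfies $g_*(K_i) > g_s^n(K)$.  For (i), one computes the Alexander polynomials $\Delta_{K_i}(t)$: as in the annulus twist arguments of Osoinach~\cite{Osoinach} and Teragaito~\cite{Teragaito}, these polynomials take infinitely many distinct values as $i$ varies, distinguishing infinitely many $K_i$.

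The main obstacle is (ii), because the $4$-ball genus is not known to be preserved under annulus twists---this is essentially Conjecture~\ref{conj:AJ} in the zero-framing case and is equally open for other framings.  To handle it, I would extract from the proofs in \cite{Akbulut1, Omae1} a concrete concordance invariant bounding $g_*$ from below, for instance a Tristram--Levine signature $\sigma_\omega$, a Casson--Gordon invariant, or the Heegaard Floer $\tau$-invariant, which already witnesses the strict inequality $g_s^n(K) < g_*(K)$ for the base knot.  Because an annulus twist modifies a Seifert matrix of $K$ in a structured, $i$-controlled fashion, one can track the chosen invariant for $K_i$ explicitly and argue that it remains strictly greater than $g_s^n(K)$ for all but finitely many $i$; discarding the exceptional indices yields the required infinite family.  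The hard part is thus pinning down an invariant that both certifies the strict inequality for $K$ and transforms sufficiently transparently under iterated annulus twists to deliver a uniform lower bound.
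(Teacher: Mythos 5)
Your opening step already fails: in this paper the induced framing of a band presentation is \emph{not} an adjustable integer. By construction (Subsection~\ref{subsec:Band}) the framing is determined by the surface $A \cup b(I \times I)$: it is $0$ when the surface is orientable and $\pm 4$ when it is non-orientable (with $\varepsilon = \mp 1$), so it only takes values in $\{0, \pm 4\}$. The curve $c$ is required to be $\varepsilon$-framed with $\varepsilon = \pm 1$ --- you cannot ``adjust the framing of the auxiliary curve $c$'' to hit an arbitrary $n$. Consequently Theorem~\ref{thm:main} can never produce $X_{K_i}(n) \approx X_K(n)$ for general $n \neq 0$, and your entire bootstrap collapses for every $n \notin \{\pm 4\}$. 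Even where the theorem does apply, your step (i) is on shaky ground: the paper explicitly remarks that when the $0$-surgeries agree the Alexander modules (hence $\Delta$ and $\sigma$) of all the $K_i$ coincide, and in the $\pm 4$-framed case Teragaito's knots were distinguished by hyperbolic volume precisely because such classical invariants fail; your claim that $\Delta_{K_i}(t)$ takes infinitely many values is unsupported and, in the framing-$0$ setting, false. Your step (ii) is likewise not a proof but a research program: Remark~\ref{rem:Sakuma} shows that $\partial X_{K_1}(n) \approx \partial X_{K_2}(n)$ forces $H_1(\Sigma_n(K_1)) \cong H_1(\Sigma_n(K_2))$ and $\Delta_{K_1}(\omega) = \Delta_{K_2}(\omega)$ at $n$-th roots of unity, so the signature-type invariants you propose are heavily constrained across the family, and you concede yourself that no invariant has been pinned down. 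Controlling $g_*$ under annulus twists is essentially Conjecture~\ref{conj:AJ}, which the paper leaves open.

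The paper avoids all of this by not using annulus twists for this corollary at all. It exhibits an explicit two-parameter family $K_{n,m}$ ($m \ge 0$) and proves Theorem~\ref{thm:Omae}: Kirby calculus gives $X_{K_{n,m}}(n) \approx X_{R(m)}(n)$ where $R(m)$ is ribbon, whence $g_s^n(K_{n,m}) = 0$; a direct Seifert-matrix computation (Lemma~\ref{lem:Alexander}) shows the constant term of $\Delta_{K_{n,m}}(t)$ is $-(1+6m) < 0$, so the Fox--Milnor condition (Lemma~\ref{lem:SliceObst}) gives $g_*(K_{n,m}) \ge 1$, while two band surgeries give $g_*(K_{n,m}) \le 1$. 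Varying $m$ yields infinitely many distinct knots (their Alexander polynomials differ, again by Lemma~\ref{lem:Alexander}), with the uniform strict inequality $g_s^n = 0 < 1 = g_*$ certified by hand rather than by an undetermined concordance invariant. If you want to salvage your approach, you would need both a band-presentation framework allowing framing $n$ (which this paper does not provide) and a resolution of the $4$-ball-genus stability question that the authors themselves only dare to conjecture.
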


\subsection*{Acknowledgments}
The authors would like to express their gratitude to Hisaaki Endo, 
Motoo Tange, and Kouichi Yasui for teaching them about $4$-manifold theory. 
They also would like to thank Masakazu Teragaito for his useful comments, 
and Makoto Sakuma for suggesting the fact in Remark~\ref{rem:Sakuma}.
They also thank the referee for careful reading of our draft and useful suggestions.

\section{Construction of diffeomorphic $4$-manifolds}\label{sec:MainThm}

In this section, we give a method for obtaining a family of framed knots 
such that each represents the same $4$-manifold up to diffeomorphism. 

\subsection{Band presentation}\label{subsec:Band} 

We study some properties of a band presentation. 
Let $K$ be a knot admitting a band presentation $(A, b, c, \varepsilon)$.
Note that the boundary of the (immersed) surface $A \cup b(I \times I)$ is $K$ in $M_{c}(\varepsilon) \approx S^3$. 
Thus we can regard $K$ as a framed knot by using the surface $A \cup b(I \times I)$
in $M_{c}(\varepsilon) \approx S^3$ and call it the {\it induced framing} from the band presentation. 
The induced framing from the band presentation is $0$ if $A \cup b(I \times I)$ is orientable, 
$\pm 4$ if $A \cup b(I \times I)$ is non-orientable and $\varepsilon = \mp 1$.
Here we give examples of band presentations.

\begin{ex}\label{ex:Jn}
Let $J_{m}$ $(m \in \Z)$ be the knot depicted in the upper half of Figure~\ref{fig:Jm}. 
Then we see that $J_m$ admits a band presentation as shown in the left side of the lower half of Figure~\ref{fig:Jm}.  
The induced framing from this band presentation is $0$ if $m$ is odd, otherwise it is $4$. 
Note that a band presentation of a given knot may not be unique. 
For example, $J_{m}$ also admits a  band presentation as shown in the right side of the lower half of Figure~\ref{fig:Jm}. 
In this case, the induced framing is $0$ if $m-1$ is odd, otherwise it is $-4$. 
Teragaito~\cite{Teragaito} and the fourth author~\cite{Takeuchi1} studied the knot $J_{2}$ and 
 the third author~\cite{Omae1} studied the knot $J_{1}$ respectively. 
\end{ex}

\begin{figure}[htb]
\includegraphics[width=.95\textwidth]{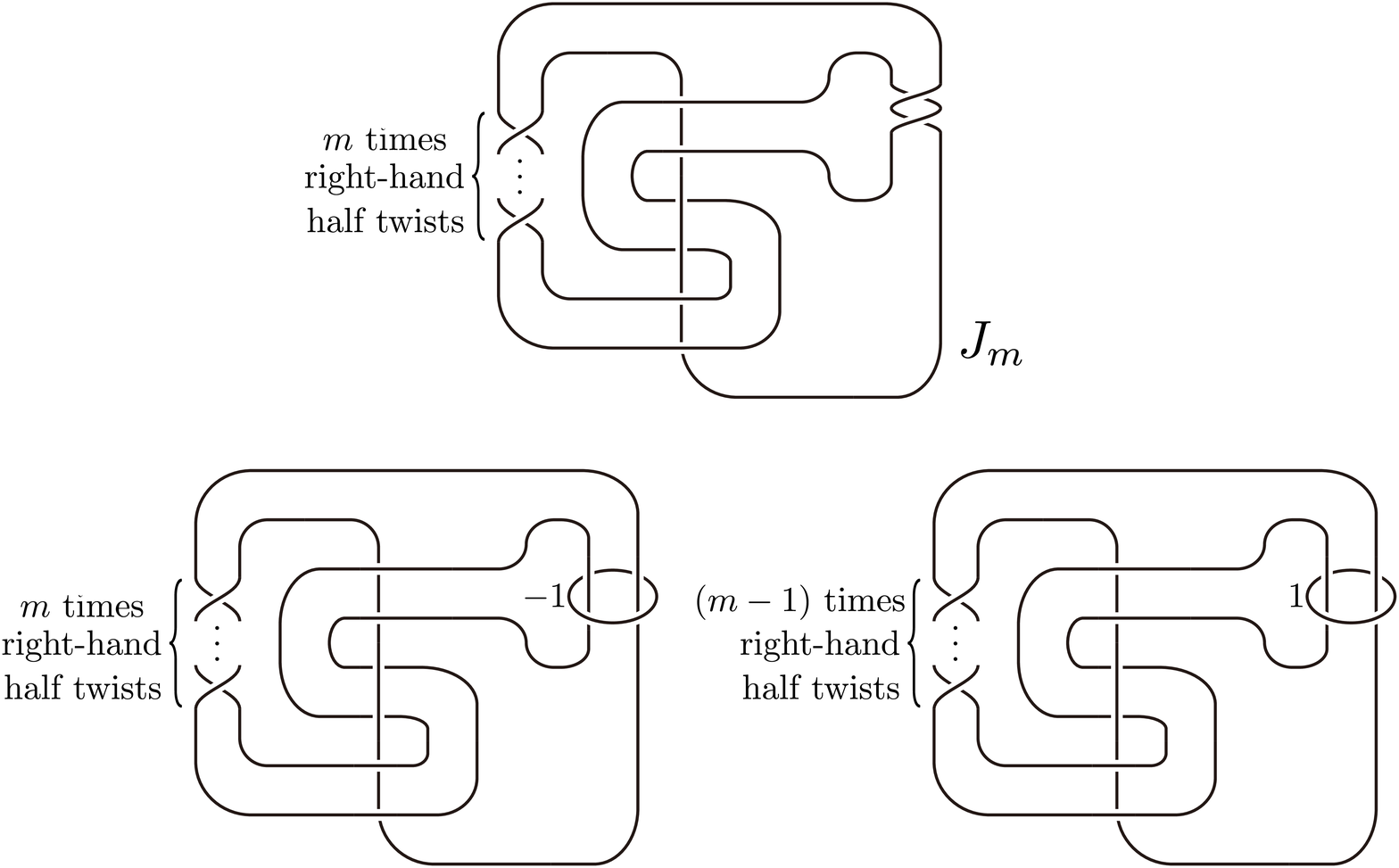}
\caption{The knot $J_m$ and its band presentations.}
\label{fig:Jm}
\end{figure}

It is easy to see that the unknotting number of $J_{m}$ is one. 
This fact is generalized as follows.

\begin{lem}\label{lem:band-presentation}
Let $K$ be a knot. 
If the unknotting number of $K$ is less than or equal to one, 
then  $K$ admits a band presentation. 
\end{lem}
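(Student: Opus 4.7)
The plan splits into the cases $u(K) = 0$ and $u(K) = 1$, in each case constructing a band presentation explicitly.

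If $K$ is the unknot, I would take $A$ to be a standardly embedded planar annulus, $c$ a $\pm 1$-framed unknot in the standard position of Figure~\ref{fig:Def-BP}, and $b$ a simple rectangular band joining the two components of $\partial A$, disjoint from $c$ and with no ribbon singularities; the resulting band sum is visibly an unknot in $M_c(\varepsilon) \approx S^3$, matching $K$.

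For $u(K) = 1$, I would use the interpretation of an unknotting crossing change as a $\pm 1$-surgery. By definition, there is an unknot $\gamma \subset S^3$ (a small circle encircling the two strands at an unknotting crossing of $K$) together with a framing such that the corresponding surgery on $\gamma$ transforms $K$ into the unknot. Equivalently, if we regard the post-surgery copy of $S^3$ as the ambient space of the band presentation, we obtain an unknot $U_0 \subset S^3$ (the image of $K$) together with a dual unknot $c$ (with framing $\varepsilon \in \{\pm 1\}$) such that $\varepsilon$-surgery on $c$ sends a curve of type $U_0$ back to a knot of type $K$. Within this ambient $S^3$, I would place a standardly embedded annulus $A$ so that $c$ sits in the standard position of Figure~\ref{fig:Def-BP} relative to $A$, and by an ambient isotopy keeping $c$ fixed, arrange $U_0$ to be expressible as a band sum of $\partial A$: that is, $U_0$ decomposes into two arcs lying on $\partial A$ (one on each component) joined by the two long sides of a rectangular band $b$ disjoint from $c$. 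The interior of $b$ is allowed to pass through $\text{int}(A)$ in ribbon singularities, which encode the nontrivial linking of $U_0$ with $c$. With this decomposition, $(\partial A \setminus b(\partial I \times I)) \cup b(I \times \partial I) = U_0$ as a 1-submanifold, and $\varepsilon$-surgery on $c$ sends its knot class to $K$ in $M_c(\varepsilon) \approx S^3$.

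The main obstacle is producing the ambient isotopy that realizes $U_0$ in the prescribed band-sum form while keeping $c$ in the standard position. The flexibility of ribbon singularities is essential here: by allowing $b$ to pierce $A$ repeatedly, we can accommodate any pattern of linking between $U_0$ and $c$, which is precisely the information distinguishing different unknotting-number-one knots from one another. This construction is a direct generalization of the explicit band presentations of the knots $J_m$ given in Example~\ref{ex:Jn}, which the proof essentially upgrades from twist knots to arbitrary $u(K) = 1$ knots.
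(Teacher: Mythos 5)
Your reduction of the $u(K)=1$ case to the surgery description of a crossing change is the right dual picture --- indeed, in the paper's construction the band-sum curve $\left( \partial A \setminus b(\partial I \times I)\right) \cup b( I \times \partial I)$, viewed in the original $S^3$ before surgery on $c$, is exactly the unknot obtained from $K$ by the unknotting crossing change, and $c$ is the crossing circle (your $u(K)=0$ case is fine). But your proof stops exactly where the lemma begins: the step ``by an ambient isotopy keeping $c$ fixed, arrange $U_0$ to be expressible as a band sum of $\partial A$'' is precisely the assertion to be proved, and you exhibit neither the band $b$ nor the annulus $A$; you only remark that ribbon singularities give enough flexibility. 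That remark is not an argument, and it also mislocates the difficulty: since $b$ is required to be disjoint from $c$, the geometric linking of the band-sum curve with $c$ in the standard position of Figure~\ref{fig:Def-BP} is carried by the two cross-sectional strands of $A$ passing through the disk bounded by $c$, not by the ribbon intersections of $b$ with $\mathrm{int}\, A$ --- those record the knotting of the band (hence of $K$), not any linking with $c$. So what you must actually prove is that the pair consisting of the unknot $U_0$ and the crossing circle $c$ can be normalized so that the two strands of $U_0$ through the disk of $c$ become the cross-section of a standardly embedded annulus while the remainder of $U_0$ becomes a band avoiding $c$ with only ribbon intersections; this, which is the entire content of the lemma, is left unproved.

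The paper closes exactly this gap with a short local construction (Figure~\ref{fig:unknotting-band}): at the unknotting crossing of $K$, add a small band so that the band surgery turns $K$ into a Hopf link; the Hopf link bounds a (twisted) annulus, and undoing the twist at the cost of introducing the $\varepsilon$-framed unknot $c$ puts the annulus in the standard position, while the dual band, created in a neighborhood of the crossing, serves as $b$ and is automatically disjoint from $c$. Because the band and the circle $c$ are produced locally at the crossing site, no global isotopy rel $c$ ever has to be constructed. To repair your write-up, replace the asserted ambient isotopy by this explicit band addition near the unknotting crossing; as it stands, the key step of your proof is an unproved claim equivalent to the lemma itself.
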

\begin{proof}
By the assumption, there exists a crossing of a diagram of $K$ such that changing the crossing yields the unknot. 
Adding a band near the crossing as shown in the left side of Figure~\ref{fig:unknotting-band}, we obtain a Hopf link which bounds a (twisted) annulus. 
Then we see that $K$ admits a band presentation.
\end{proof}

\begin{rem}
We  note that the set of knots admitting a band presentation is equal to the set of knots obtained from the Hopf link applying a band surgery. 
For studies on a band surgery, see~\cite{Abe1, Abe2} for example.
\end{rem}

\begin{figure}[htb]
\includegraphics[width=.55\textwidth]{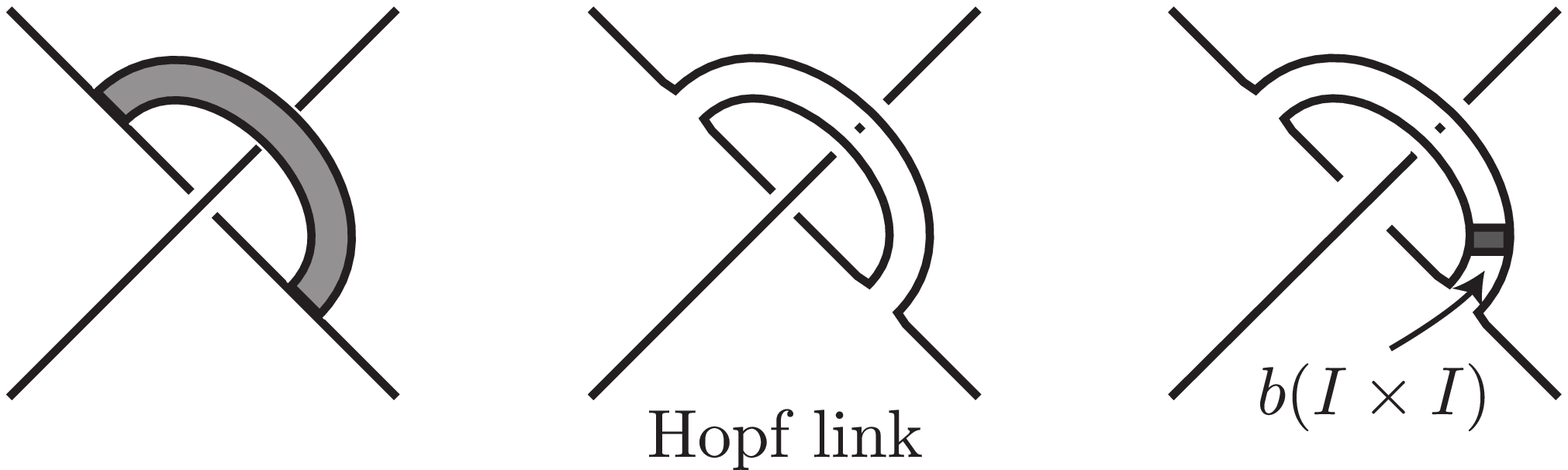}
\caption{}
\label{fig:unknotting-band}
\end{figure}

\subsection{Annulus twist} \label{subsec:AnnulusTwist} 
Using an annulus twist,
Osoinach~\cite{Osoinach} constructed infinitely many knots such that each knot yields the same $3$-manifold by $0$-surgery. 
Let $A$ be a standardly embedded annulus in $S^3$ and set $\partial A = c_1 \cup c_2$. 
An \textit{annulus twist along $A$} is to apply Dehn surgery on $c_1$ and $c_2$ along slopes $1$ and $-1$ respectively, which yields a diffeomorphism $\varphi : S^3 \setminus \nu \partial A \to S^3 \setminus \nu \partial A$, 
where $\nu \partial A$ is an open tubular neighborhood of $\partial A$ in $S^3$. 
In the case where the slope of $c_1$ is $-1$ and that of $c_2$ is $1$, 
it is described as the diffeomorphism $\varphi ^{-1}$.
In general, the surgery on $c_1$ and $c_2$ along slopes $1/n$ and $-1/n$ for $n \in \Z$ is 
described as the diffeomorphism $\varphi^n$, that is, twisting $n$ times.


As mentioned in Section~\ref{sec:intro}, for a given knot with a band presentation, 
we can construct infinitely many knots. 
Here we explain this process precisely. 
Let $K$ be a knot admitting a band presentation $(A,b,c,\varepsilon)$. 
Shrinking the annulus $A$ slightly, we obtain an annulus $A' \subset A$ as shown in Figure~\ref{fig:Ex-AT}. 
We call $A'$ the {\it associated annulus} of $A$. 
Applying an annulus twist $n$ times along $A'$, we obtain a knot $K_n \subset S^3$ with a band presentation $(A, b_n, c, \varepsilon)$, where $b_{n}$ is the obvious one. 
Note that the induced framing of $(A, b_n, c, \varepsilon)$ coincides with that of $(A, b, c, \varepsilon)$. 
We call $K_n$ the knot obtained from $K$ by applying an annulus twist $n$ times without mentioning the associated annulus $A'$. 

\begin{rem}\label{rem:n-times}
$K_{n+m}=(K_{n})_{m}$ for $n, m \in \Z$. 
\end{rem}

The following proposition follows from \cite[Theorem 2.3]{Osoinach}.

\begin{prop}\label{prop:3-diffeo} 
Let $K$ be a knot with a band presentation, 
$K_{n}$ the knot obtained from $K$ by applying an annulus twist $n$ times.
Then $M_{K}(\gamma) \approx M_{K_{n}}(\gamma)$,
where  $\gamma$ is the induced framing from the band presentation. 
\end{prop}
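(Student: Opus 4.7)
The plan is to produce an explicit diffeomorphism $M_K(\gamma) \approx M_{K_n}(\gamma)$ out of the annulus twist itself, in the spirit of Osoinach's original $0$-surgery argument.

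By definition, the $n$-fold annulus twist is the diffeomorphism $\varphi^n \colon S^3 \to S^3$ induced by $(1/n, -1/n)$-Dehn surgery on the unlink $\partial A' = c_1 \cup c_2$ inside $M_c(\varepsilon) \approx S^3$; since $(1/n)$-surgery on an unknot yields $S^3$, this surgery returns $S^3$ and thereby gives the self-diffeomorphism $\varphi^n$. By construction, $\varphi^n$ carries $K$ to $K_n$ and the band $b$ to $b_n$, hence carries the surface $F := A \cup b(I \times I)$ setwise to $F_n := A \cup b_n(I \times I)$.

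The crucial step is to verify that $\varphi^n$ sends the framed knot $(K, \gamma)$ to $(K_n, \gamma)$. The induced framing $\gamma$ on $K$ is realized by $\operatorname{lk}(K, K^+)$, where $K^+$ is a parallel push-off of $K$ along $F$. Since $\varphi^n$ sends $(K, F)$ to $(K_n, F_n)$ and preserves linking numbers (being an ambient diffeomorphism of $S^3$), we obtain $\operatorname{lk}(K_n, \varphi^n(K^+)) = \gamma$, with $\varphi^n(K^+)$ a push-off of $K_n$ along $F_n$. This push-off represents the induced framing of the band presentation $(A, b_n, c, \varepsilon)$, which equals $\gamma$ by the observation in Subsection~\ref{subsec:Band}. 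Hence $\varphi^n$ carries the $\gamma$-framed $K$ to the $\gamma$-framed $K_n$, and the functoriality of Dehn surgery under ambient diffeomorphisms then yields $M_K(\gamma) \approx M_{K_n}(\gamma)$.

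The main obstacle is the framing verification. It reduces to the fact that $\varphi^n$ is modeled on a Dehn twist along $A'$, supported in a neighborhood of $A'$, so it preserves $F$ setwise while twisting $b$ into $b_n$. Because the induced framing is surface-determined and the surface is transported by $\varphi^n$, the framing on the left and on the right are both seen to equal the same integer $\gamma$, and the surgery result is the same 3-manifold.
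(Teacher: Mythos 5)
Your proposal founders on its very first step: the annulus twist $\varphi^n$ is \emph{not} a self-diffeomorphism of $S^3$. As the paper defines it in Subsection~\ref{subsec:AnnulusTwist}, the $\pm 1/n$-surgery on $c_1 \cup c_2$ yields a diffeomorphism $\varphi^n \colon S^3 \setminus \nu\, \partial A' \to S^3 \setminus \nu\, \partial A'$ of the link \emph{complement} only; it does not extend over the tubular neighborhoods of $c_1$ and $c_2$, since it takes meridians of the $c_i$ to curves of nontrivial slope. The inference ``each $1/n$-surgery on an unknot returns $S^3$, hence we get a self-diffeomorphism of $S^3$ carrying $K$ to $K_n$'' is a non sequitur: the identification of the surgered manifold with $S^3$ does not fit together with the identity outside the surgery tori. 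Indeed, if such an ambient $\varphi^n$ existed, then by Cerf's theorem $K$ and $K_n$ would be isotopic knots, contradicting Corollary~\ref{cor:problem}, where the knots obtained by annulus twists are shown to be mutually distinct via hyperbolic volume. A second symptom of the same error: your argument uses the hypothesis on $\gamma$ only for bookkeeping, so as written it would prove $M_K(m) \approx M_{K_n}(m)$ for \emph{every} framing $m$. This is false in general: since the complements $S^3 - K_n$ in the paper's examples have pairwise distinct volumes, Thurston's Dehn surgery theorem shows the surgered manifolds differ for all but finitely many slopes. The proposition is special to the induced framing, and any correct proof must use that hypothesis in an essential way.

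What is actually missing is the extension problem, and solving it is the entire content of the result. One must show that $\varphi^n$, restricted to $S^3 \setminus \nu (K \cup \partial A')$, extends across the solid torus glued to $\partial \nu K$ in the $\gamma$-surgery, and this works precisely because the surgery slope is the framing induced by the surface $A \cup b(I \times I)$. This is Osoinach's Theorem 2.3, which the paper simply cites as its proof of Proposition~\ref{prop:3-diffeo}; alternatively, the same boundary diffeomorphism is exhibited by the Kirby calculus in the proof of Theorem~\ref{thm:main} (Figures~\ref{fig:Kirby-Moves1} and~\ref{fig:Kirby-Moves2}), where a blow-up, a handle slide over the $\mp 1$-framed unknot, an annulus twist, and a blow-down carry the $\gamma$-framed $K_{n \pm 1}$ to the $\gamma$-framed $K_n$ --- and the slide and blow-down preserve the framing only because $\gamma$ is the surface framing (with $\gamma = \pm 4$ first normalized to $0$ as in Figure~\ref{fig:blow-up}). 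Your push-off computation along $F$ is correct as far as it goes, but it verifies a compatibility that only becomes relevant once the extension across the surgery torus has been established, which your proposal never addresses.
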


\begin{rem}
Let $K_1$, $K_2, \cdots$ be knots 
such that $M_{K_{i}}(0) \approx M_{K_{j}}(0)$ for each $i, j \in \N$.
Then the Alexander modules of  $K_{i}$ and $K_{j}$  are isomorphic.
In particular, we have $\Delta_{K_{i}}(t) = \Delta_{K_{j}}(t)$ and $\sigma(K_{i}) =\sigma(K_{j})$. 
Here $\Delta_K(t)$ denotes the Alexander polynomial and $\sigma(K)$ denotes the signature of a knot $K$. 
Therefore it is not easy to distinguish $K_i$ and $K_j$. 
\end{rem}

\begin{rem}
In the definition of a band presentation, the annulus $A$ is not necessary to be unknotted. 
In fact, the argument in this subsection holds for a band presentation with a knotted embedded annulus in $S^3$. 
\end{rem}

\subsection{Main theorem} \label{subsec:TheConstructionOfKnots}

In this subsection, we prove the main theorem.

\begin{thm} \label{thm:main}
Let $K$ be a knot admitting a band presentation, 
$K_{n}$ the knot obtained from $K$ by applying an annulus twist $n$ times for $n \in \Z$.
Then $X_{K}(\gamma) \approx X_{K_{n}}(\gamma)$,
 where $\gamma$ is the induced framing from the band presentation. 
\end{thm}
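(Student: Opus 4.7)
The plan is to promote the boundary diffeomorphism of Proposition~\ref{prop:3-diffeo} to a diffeomorphism of the corresponding 4-manifolds via Akbulut's carving extension (Lemma~\ref{lem:extend}), thus generalizing the argument the fourth author used in \cite{Takeuchi1} for the special case $\gamma=4$ to arbitrary band presentations. Writing $\partial A' = c_1 \cup c_2$, the $n$-fold annulus twist is realized by a diffeomorphism $\varphi^n \colon S^3 \setminus \nu\partial A' \to S^3 \setminus \nu\partial A'$ supported in an arbitrarily small regular neighborhood $N$ of $A'$, with $\varphi^n(K)=K_n$. Osoinach's surgery calculation, which underlies Proposition~\ref{prop:3-diffeo}, shows that $\varphi^n$ extends across the $\gamma$-surgery solid tori on $K$ and $K_n$ to a diffeomorphism $\hat\varphi^n \colon M_K(\gamma) \to M_{K_n}(\gamma)$.

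Next I would verify the meridian hypothesis of Lemma~\ref{lem:extend}. Choose a small meridian loop $\mu_K$ of $K$ based at a point of $K$ lying on $\partial A$ away from the band $b$; since $A'$ may be taken to be a thin shrinking of $A$, we may arrange $\mu_K$ to be disjoint from $N$. Then $\hat\varphi^n(\mu_K)=\varphi^n(\mu_K)=\mu_K$, and since $K$ and $K_n$ agree outside $N$, the same loop is simultaneously a meridian of $K_n$. Hence $\hat\varphi^n$ carries the attaching circle of the dual 2-handle of $X_K(\gamma)$ onto that of $X_{K_n}(\gamma)$, so Lemma~\ref{lem:extend} yields an extension to a diffeomorphism $X_K(\gamma) \approx X_{K_n}(\gamma)$.

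The main obstacle I anticipate is not logical but a matter of careful bookkeeping: one must confirm that the extension of $\varphi^n$ across the two pairs of surgery solid tori is compatible with the chosen meridian on both sides, without a hidden ambient twist that shifts its isotopy class. Placing $\mu_K$ outside $N$ makes this transparent in principle, but a rigorous argument requires tracking the meridian/longitude framings of the solid tori filled in on the $K$- and $K_n$-sides; once this identification is pinned down, the conclusion follows mechanically from Akbulut's carving lemma.
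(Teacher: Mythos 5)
Your high-level strategy (boundary diffeomorphism plus Akbulut's carving extension, Lemma~\ref{lem:extend}) is the same as the paper's, but the central step of your argument contains a genuine gap: the claim that the boundary diffeomorphism $\hat\varphi^n\colon \partial X_K(\gamma)\to \partial X_{K_n}(\gamma)$ can be taken to be supported in a small neighborhood $N$ of $A'$, hence fixes a meridian $\mu_K$ placed away from $N$. The annulus twist $\varphi^n$ is only defined on $S^3\setminus\nu\partial A'$, and it does \emph{not} extend over $\nu c_1\cup\nu c_2$ by the identity: it carries the meridian of $\partial\nu c_i$ to a $(1,\pm n)$-type curve. To close up a diffeomorphism $M_K(\gamma)\to M_{K_n}(\gamma)$ one must absorb the $\pm 1/n$-fillings on $c_1,c_2$, and this is done by dragging them through the surgery solid torus of $K$ --- which is precisely where the hypothesis that $\gamma$ is the \emph{induced} framing from the surface $A\cup b(I\times I)$ enters. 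The resulting diffeomorphism is therefore not supported near $A'$; it sweeps across the surgery solid torus, on whose boundary $\mu_K$ lies, so the assertion $\hat\varphi^n(\mu_K)=\mu_K$ is unjustified. A quick sanity check confirms something is wrong: your argument never uses the value of $\gamma$, so if it were valid it would prove $M_K(m)\approx M_{K_n}(m)$ and $X_K(m)\approx X_{K_n}(m)$ for \emph{every} integer framing $m$, which is false (the theorem, like Proposition~\ref{prop:3-diffeo}, holds only for the induced framing $\gamma\in\{0,\pm4\}$ with matching $\varepsilon$).

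As a consequence, the two hypotheses of Lemma~\ref{lem:extend} --- that $g(\mu)$ is a $0$-framed unknot in the diagram of $X_{K_n}(\gamma)$, and that the carved diagram represents $D^4$ --- are not the trivialities your proposal makes them out to be; they are the substantive content of the proof. In the paper, the boundary diffeomorphism is built by an explicit sequence of moves (blow-up, handle slide over $K_n$, annulus twist, isotopy, blow-down; Figures~\ref{fig:Kirby-Moves1} and~\ref{fig:Kirby-Moves2}), the meridian $\mu$ is tracked through every move, and its image $f(\mu)$ is \emph{not} a standard meridian of $K_n$: verifying that $D^4\cup h^1\cup h^2$ is a $4$-ball requires a further handle slide producing a canceling pair (Figures~\ref{fig:4-manifold1} and~\ref{fig:4-manifold2}). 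Your closing paragraph correctly flags the framing/bookkeeping issue but dismisses it as mechanical; in fact pinning down the image of $\mu$ under the corrected diffeomorphism is exactly the nontrivial verification, and without it the proof does not go through.
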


We briefly explain the proof of Theorem~\ref{thm:main}. 
Let $K$ be a knot with a band presentation,
$K'$ the knot obtained from $K$ by applying an annulus twist, 
and $\gamma$ the induced framing from the band presentation. 
First, we give a  diffeomorphism  $f: \partial X_{K}(\gamma) \to \partial X_{K'}(\gamma)$
by Kirby calculus (as explained in \cite{Teragaito}). 
The problem is whether $f$ extends to a diffeomorphism $\tilde{f}: X_{K}(\gamma) \to X_{K'}(\gamma)$
such that $\tilde{f}|_{ \partial X_{K}(\gamma)}=f$. 
Akbulut \cite{Akbulut1} introduced a sufficient condition for $f$ to be extended to $\tilde{f}$ (Lemma~\ref{lem:extend}) using Cerf's theorem which states that any self-diffeomorphism of $S^3$ extends over $D^4$ (e.g.~see \cite{Cerf}). 
(A general sufficient condition is shown in \cite{AkbulutBook}.) 
We can check that $f$ satisfies the condition, proving Theorem~\ref{thm:main}.
Here we recall this sufficient condition and give a proof for the sake of the readers.

\begin{lem}[\cite{Akbulut1}]\label{lem:extend}
Let $K$ and $K'$ be knots in $S^3 = \partial D^4$ with a diffeomorphism $g : \partial X_{K}(n) \to \partial X_{K'}(n)$, 
and let $\mu$ be a meridian of $K$. 
Suppose that if $\mu$ is $0$-framed, then $g(\mu)$ is the $0$-framed unknot in the Kirby diagram representing $X_{K'}(n)$. 
If the Kirby diagram which consists of the $2$-handle represented by $K'$ with framing $n$ and the $1$-handle represented by  (dotted) $g(\mu)$ represents $D^4$, 
then $g$ extends to a diffeomorphism $\widetilde{g}: X_{K}(n) \to X_{K'}(n)$ 
such that $\tilde{g}|_{\partial X_{K}(n)}=g$. 
\end{lem}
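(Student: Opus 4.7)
The plan is to use Akbulut's carving trick to reduce the extension problem to a diffeomorphism of $D^4$ that extends one of $S^3$, which is then supplied by Cerf's theorem. The first step is to replace the $0$-framed meridian $\mu$ of $K$ by a dotted circle, i.e., reinterpret it as a $1$-handle carving out a disk $D_{\mu} \subset D^4$ in the handlebody $X_{K}(n)$. Because $\mu$ is a meridian of $K$, this $1$-handle and the $2$-handle along $K$ form a canceling pair, so
\[
X_{K}(n) \cup h^{1}_{\mathrm{dot}\,\mu} \approx D^4 \, .
\]
By hypothesis, the same holds on the other side:
\[
X_{K'}(n) \cup h^{1}_{\mathrm{dot}\,g(\mu)} \approx D^4 \, .
\]

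Next, I would track what the $1$-handle attachment does on the boundary. Attaching a $1$-handle represented by a dotted $0$-framed unknot amounts to $0$-surgery on that unknot in the boundary $3$-manifold, so the boundary of each of the two $D^4$'s above is obtained from $\partial X_{K}(n)$ (resp. $\partial X_{K'}(n)$) by $0$-surgery on $\mu$ (resp. $g(\mu)$). Because $g$ carries $\mu$ with its $0$-framing to $g(\mu)$ with its $0$-framing, $g$ descends to a diffeomorphism $g_{*}$ between these two surgered boundaries, both of which are $S^3$. Now Cerf's theorem, that every self-diffeomorphism of $S^3$ extends over $D^4$, produces a self-diffeomorphism $G$ of $D^4$ with $G|_{S^3}=g_{*}$. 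Under the two identifications with $D^4$ above, $G$ is a diffeomorphism
\[
G \colon X_{K}(n) \cup h^{1}_{\mathrm{dot}\,\mu} \;\longrightarrow\; X_{K'}(n) \cup h^{1}_{\mathrm{dot}\,g(\mu)} \, .
\]

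The last step is to uncarve, i.e., to recover $X_{K}(n)$ from $X_{K}(n) \cup h^{1}_{\mathrm{dot}\,\mu}$ and symmetrically on the right. Uncarving corresponds to attaching a $0$-framed $2$-handle along the dual curve $\mu^{*}$, namely the meridian of $\mu$ in the newly glued solid torus of the $0$-surgery; this $2$-handle cancels the dotted $1$-handle and gives back $X_{K}(n)$. The dual curve is intrinsic to the surgery, so $g_{*}$ automatically sends $\mu^{*}$ to $g(\mu)^{*}$ with matching $0$-framings. Since $G$ restricted to the boundary is exactly $g_{*}$, the attaching data of the two canceling $2$-handles correspond under $G$, and $G$ therefore extends across these $2$-handles to a diffeomorphism $\widetilde{g} \colon X_{K}(n) \to X_{K'}(n)$. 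A final check that uncarving inverts the boundary surgery shows $\widetilde{g}|_{\partial X_{K}(n)} = g$.

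The main obstacle is verifying that the canceling $2$-handle on the left is carried to the canceling $2$-handle on the right by the Cerf extension $G$; this is where one really uses that the hypothesis matches the two framings, and that the dual curve $\mu^{*}$ is canonical, depending only on the surgery and not on any particular choice in the extension. Once that identification is made, the extension across the $2$-handles is a routine handle-by-handle argument, and the proof reduces to invoking Cerf's theorem.
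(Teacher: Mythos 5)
Your proposal is correct and follows essentially the same route as the paper: carve out the disk bounded by the meridian $\mu$ (which cancels the $2$-handle, so the carved manifold is $D^4$), use the hypothesis to identify the carved $X_{K'}(n)$ with $D^4$ as well, extend the boundary diffeomorphism over $D^4$ via Cerf's theorem, and reglue using the matching $0$-framings. The paper phrases the carving as removing tubular neighborhoods $\nu D_1$, $\nu D_2$ of properly embedded disks and regluing via the extension $g'$ of $g|_{\nu\mu}$, while you phrase it with dotted $1$-handles and dual $2$-handles, but these are the same argument.
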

\begin{proof}
Since $\mu$ and $g(\mu)$ are unknotted, these bound obvious properly embedded disks in $D^4$, 
$D_1$ and $D_2$ respectively.
Let $\nu \mu$ be an open tubular neighborhood of $\mu$ in $S^3 = \partial D^4$. 
Let $\nu D_i$ be an open tubular neighborhood of $D_i$ in $D^4$. 
Since the framing of $g(\mu)$ is $0$ if we assume that $\mu$ is $0$-framed, 
$g|_{\nu\mu}$ extends to a diffeomorphism $g' : \overline{\nu D_1} \to \overline{\nu D_2}$. 
Now $X_{K}(n) \setminus \nu D_1 $ is diffeomorphic to $D^4$ 
and by the assumption $X_{K'}(n) \setminus \nu D_2$ is also diffeomorphic to $D^4$. 
Therefore $g$ extends to a diffeomorphism $\widetilde{g}: X_{K}(n) \to X_{K'}(n)$
such that $\tilde{g}|_{ \partial X_{K}(n)}=g$.
\end{proof}


Now we start the proof of Theorem~\ref{thm:main}. 

\begin{proof}[Proof of Theorem~\ref{thm:main}]
Let $(A,b,c,\varepsilon)$ be a band presentation of $K$ and $\gamma$ the induced framing. 
The proof is divided into four cases.

\medskip 
\noindent
{\bf Case 1.} $\varepsilon =1$ and $\gamma=0$.

\medskip

First, we consider the case where $K=J_{2}$ with the band presentation $(A,b,c,1)$ depicted in the right side of Figure~\ref{fig:Jm}.
Then the induced framing $\gamma$ is $0$. 
Figure~\ref{fig:Kirby-Moves1} illustrates a diffeomorphism from $\partial X_{K_{n+1}}(0)$ to $\partial X_{K_{n}}(0)$ for an integer $n$. 
To see this, recall that $K_{n+1}=(K_{n})_1$ as noted in Remark~\ref{rem:n-times}, and ignore the meridian $\mu$ of $K_{n+1}$  for a while, which is illustrated by a broken circle in Figure~\ref{fig:Kirby-Moves1}. 
The first Kirby diagram in Figure~\ref{fig:Kirby-Moves1}  is obtained from $X_{K_{n+1}}(0)$ by a blow-up. 
Sliding the $-1$-framed unknot to the $0$-framed $K_{n}$, we obtain the second Kirby diagram.
Applying an annulus twist, we obtain the third Kirby diagram. 
After an ambient isotopy and a blow-down, we obtain the $0$-framed $K_{n}$ which represents $X_{K_{n}}(0)$. 
This Kirby calculus induces a diffeomorphism from $\partial X_{K_{n+1}}(0)$ to $\partial X_{K_{n}}(0)$ 
and we denote it by $f_{n+1}$.

Remember the meridian $\mu$ of $K_{n+1}$. 
We can check that $f_{n+1}(\mu)$ is the $0$-framed unknot in the Kirby diagram of $X_{K_n}(0)$ if $\mu$ is $0$-framed. 
Let $W$ be the $4$-manifold $D^4 \cup h^1 \cup h^2$, 
where $h^1$ is the $1$-handle represented by $f_{n+1}(\mu)$ with a dot and $h^2$ 
is the $2$-handle represented by $K_{n}$ with framing $0$.
Sliding $h^2$ over $h^1$, we obtain a canceling pair (see Figure~\ref{fig:4-manifold1}), implying that $W \approx D^4$.
By Lemma~\ref{lem:extend}, $X_{K_{n+1}}(0) \approx X_{K_{n}}(0)$,
proving Theorem~\ref{thm:main} in the case where $K=J_{2}$ with the band presentation.

Furthermore we see that the above argument can be applied to any knot admitting a band presentation with $\varepsilon = 1$ and $\gamma = 0$ since all of a blow-up, a blow-down, handle slides, and isotopies in Figures~\ref{fig:Kirby-Moves1} and~\ref{fig:4-manifold1} were done in a neighborhood of the annulus $A$. 
Now we have completed the proof for Case 1.

\medskip
\noindent
\textbf{Case 2.} $\varepsilon =1$  and $\gamma=-4$.

\medskip 

In this case, we can prove similarly as Case 1 since the induced framing $\gamma =-4$ changed to $0$ when we apply a blow-up as shown in Figure~\ref{fig:blow-up}.

\begin{figure}[htb]
\includegraphics[width=1.0\textwidth]{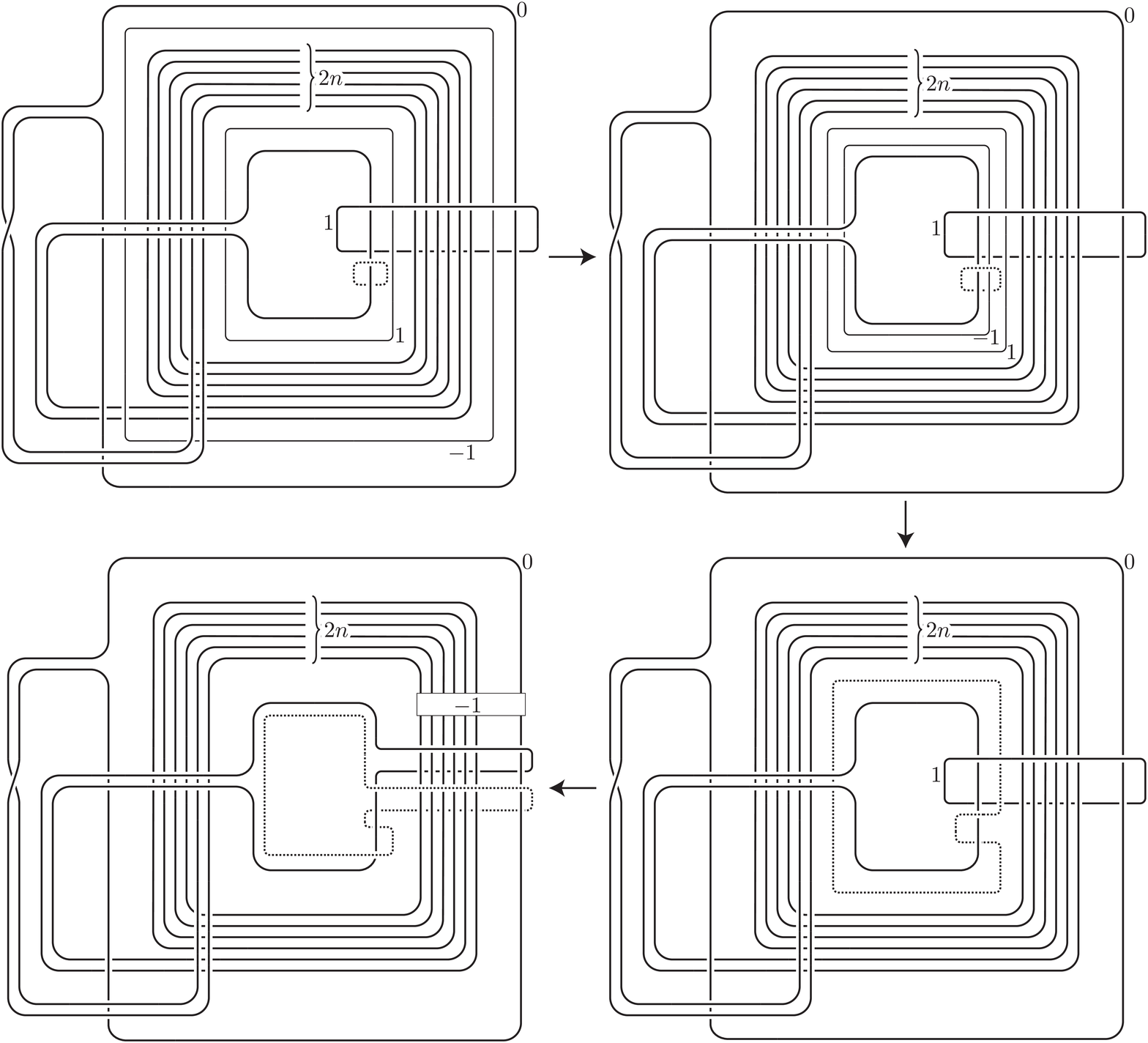}
\caption{Kirby diagrams representing a diffeomorphism $f_{n+1} : \partial X_{K_{n+1}}(0) \to \partial X_{K_{n}}(0) $.}
\label{fig:Kirby-Moves1}
\end{figure}
\begin{figure}[htb]
\includegraphics[width=1.0\textwidth]{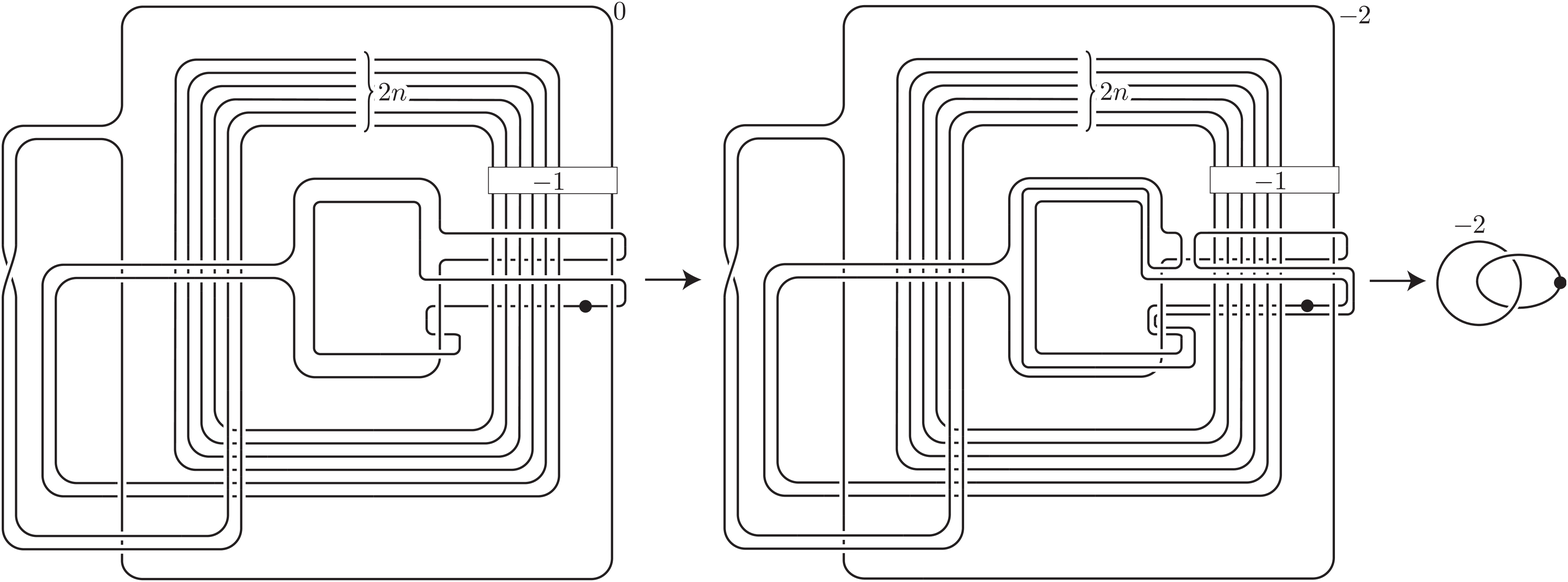}
\caption{$W \approx D^4$.}
\label{fig:4-manifold1}
\end{figure}
\begin{figure}[htb]
\includegraphics[width=50mm]{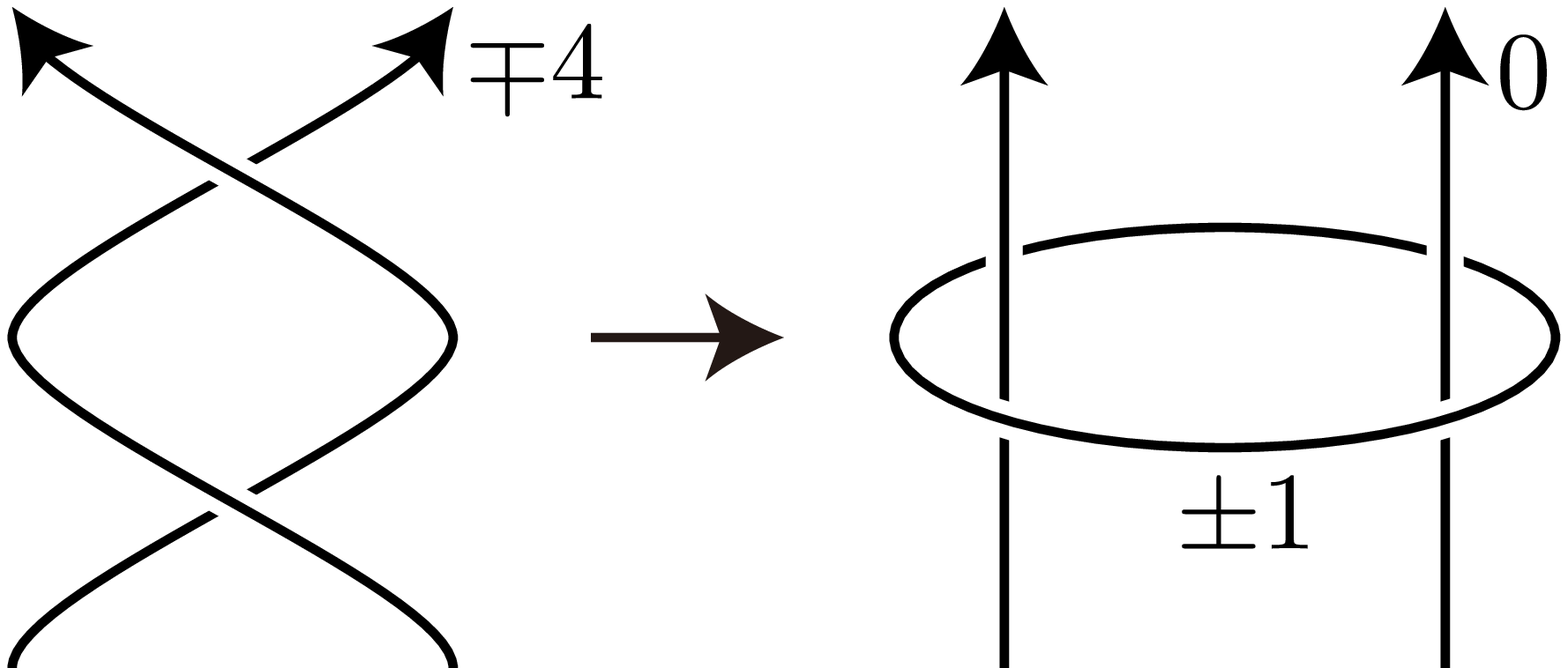}
\caption{}
\label{fig:blow-up}
\end{figure}

\medskip 
\noindent
\textbf{Case 3.} $\varepsilon =-1$  and $\gamma=0$.

\medskip 

In this case, the argument is similar to that for Case 1. 
The difference is that we consider a diffeomorphism from $\partial X_{K_{n-1}}(0)$ to $\partial X_{K_{n}}(0)$. 

First, we consider the case where $K=J_{1}$ with the band presentation $(A,b,c,-1)$ depicted in the left side of Figure~\ref{fig:Jm}.
Then the induced framing is $0$. 
Figure~\ref{fig:Kirby-Moves2} illustrates a diffeomorphism from $\partial X_{K_{n-1}}(0)$ to $\partial X_{K_{n}}(0)$ for an integer $n$. 
To see this, recall that $K_{n-1}=(K_{n})_{-1}$ as noted in Remark~\ref{rem:n-times}, and ignore the meridian $\mu'$ of $K_{n-1}$  for a while, which is illustrated by a broken circle in Figure~\ref{fig:Kirby-Moves2}. 
The first Kirby diagram in Figure~\ref{fig:Kirby-Moves2} is obtained from $X_{K_{n-1}}(0)$ by a blow-up. 
Sliding the $1$-framed unknot to the $0$-framed $K_{n}$, we obtain the second Kirby diagram.
Applying an annulus twist, we obtain the third Kirby diagram. 
After an ambient isotopy and a blow-down, we obtain the $0$-framed $K_{n}$ which represents $X_{K_{n}}(0)$. 
This Kirby calculus induces a diffeomorphism from $\partial X_{K_{n-1}}(0)$ to $\partial X_{K_{n}}(0)$ 
and we denote it by $f'_{n-1}$.

Remember the meridian $\mu'$ of $K_{n-1}$. 
We can check that $f'_{n-1}(\mu')$ is the $0$-framed unknot in the Kirby diagram of $X_{K_n}(0)$ if $\mu'$ is $0$-framed. 
Let $W'$ be the $4$-manifold $D^4 \cup h^1 \cup h^2$, 
where $h^1$ is the $1$-handle represented by $f'_{n-1}(\mu')$ with a dot and $h^2$ 
is the $2$-handle represented by $K_{n}$ with framing $0$.
Sliding $h^2$ over $h^1$, we obtain a canceling pair (see Figure~\ref{fig:4-manifold2}), implying that $W' \approx D^4$.
By Lemma~\ref{lem:extend}, $X_{K_{n-1}}(0) \approx X_{K_{n}}(0)$,
proving Theorem~\ref{thm:main} in the case where $K=J_{1}$ with the band presentation.

Furthermore we see that the above argument can be applied to any knot admitting a band presentation with $\varepsilon = -1$ and $\gamma = 0$ since all of a blow-up, a blow-down, handle slides, and isotopies in Figures~\ref{fig:Kirby-Moves2} and~\ref{fig:4-manifold2} were done in the neighborhood of the annulus $A$. 
Now we have completed the proof for Case 3.

\begin{figure}[htb]
\includegraphics[width=1.0\textwidth]{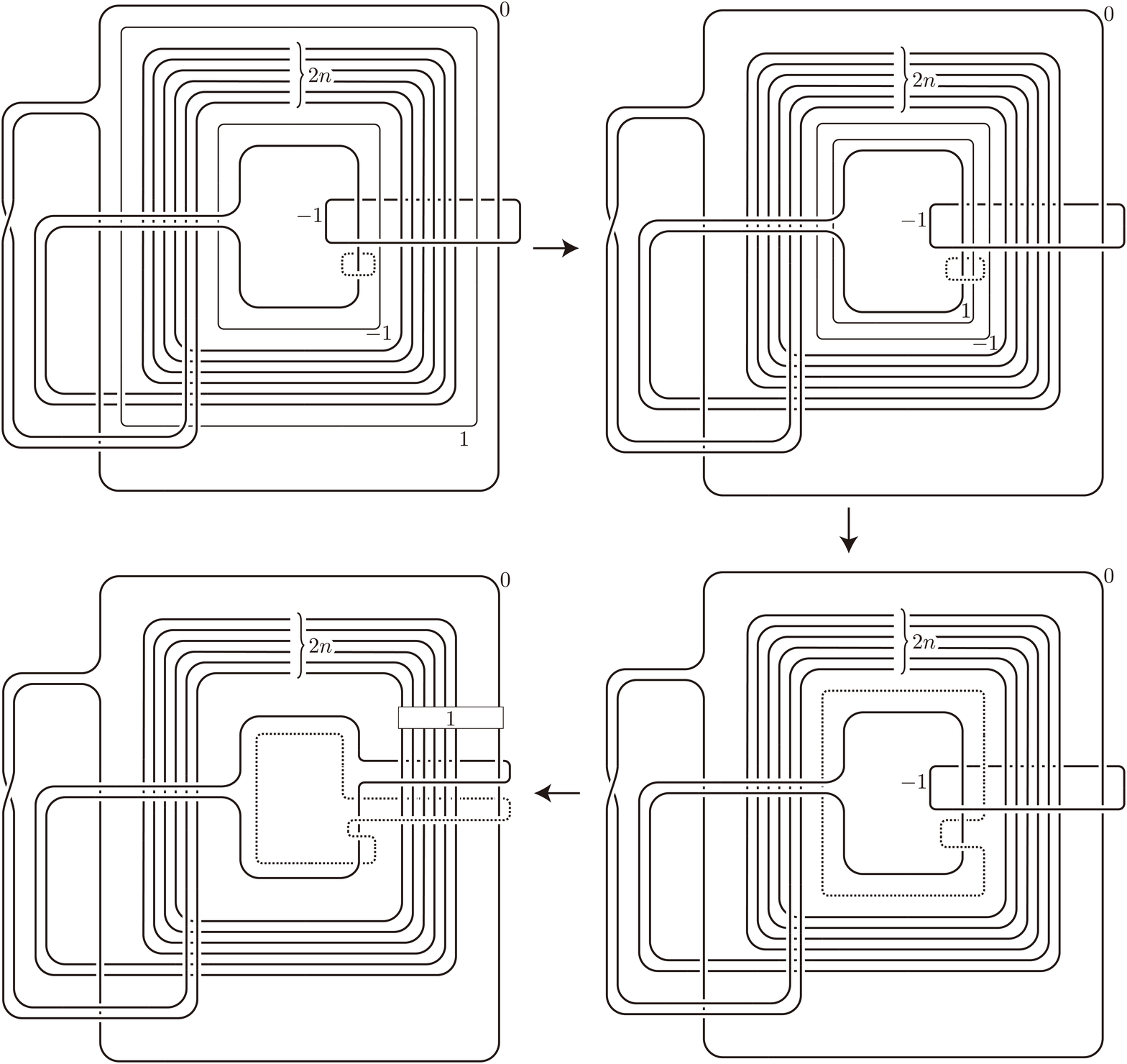}
\caption{Kirby diagrams representing a diffeomorphism $f_{n-1} : \partial X_{K_{n-1}}(0) \to \partial X_{K_{n}}(0) $.}
\label{fig:Kirby-Moves2}
\end{figure}
\begin{figure}[htb]
\includegraphics[width=80mm]{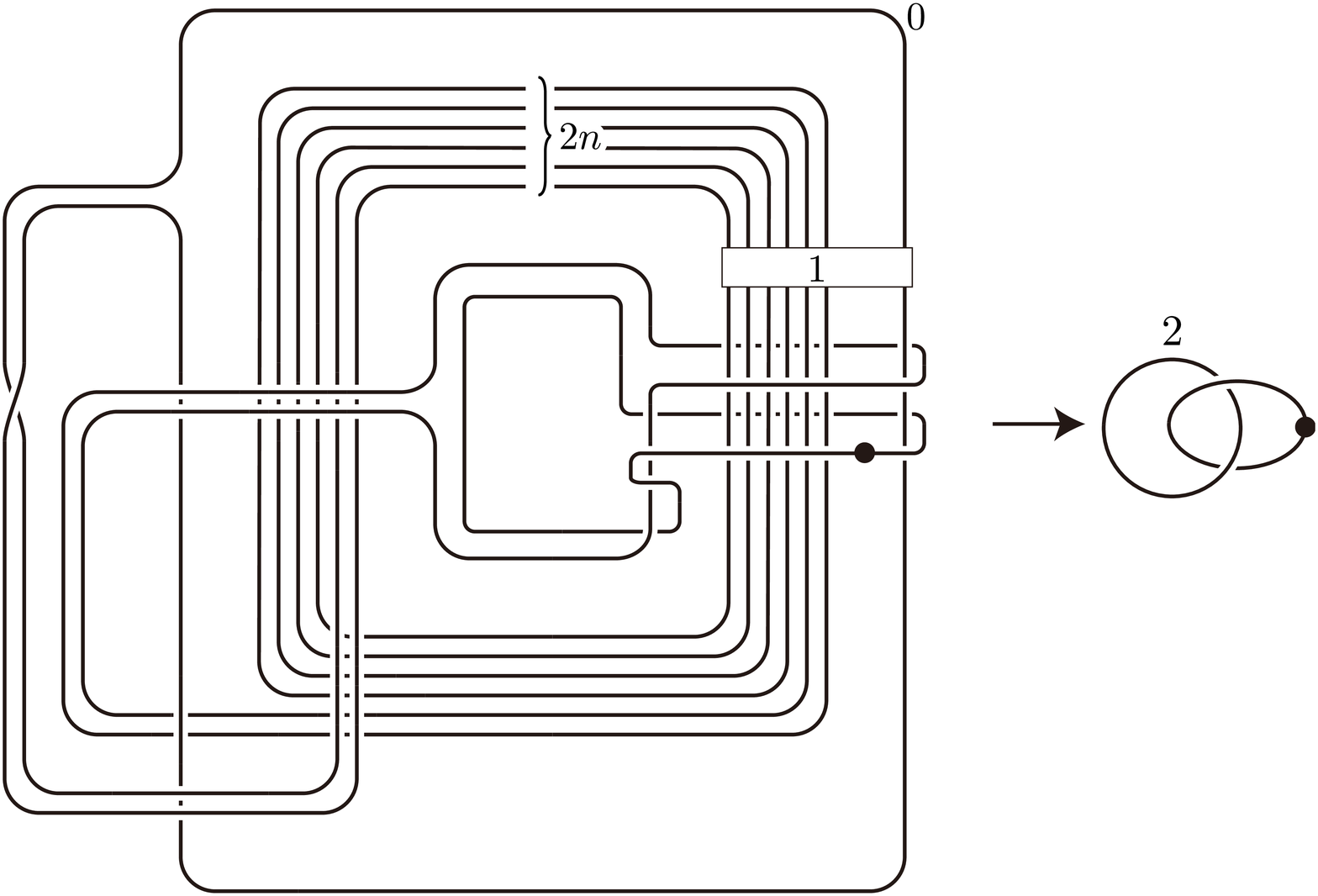}
\caption{$W' \approx D^4$.}
\label{fig:4-manifold2}
\end{figure}

\medskip 
\noindent
\textbf{Case 4.} $\varepsilon =-1$ and $\gamma=4$.
\medskip 

In this case, we can prove similarly as Case 3 since the framing $\gamma =4$ changed to $0$ when we apply a blow-up as shown in Figure~\ref{fig:blow-up}. 

\medskip 

Now we have completed the proof of Theorem~\ref{thm:main}. 
\end{proof}

Let $K$ be a knot with a band presentation $(A,b,c,\varepsilon)$,
$A'$ the associated annulus of $A$, and set $\partial A' = c_1 \cup c_2$.
The \textit{augmented $3$-component link} $L$ associated to the band presentation is the link 
$\left( \partial A \setminus b(\partial I \times I)\right) \cup b( I \times \partial I) \cup c_1 \cup c_2$ in $M_{\varepsilon}(c) \approx S^3$ and denote it by $L = K \cup c_1 \cup c_2$.

\begin{cor} \label{cor:problem}
For each $\gamma \in \{0, \pm 4\}$, there exist infinitely many mutually distinct knots $K_1, K_2, \cdots$
such that  $X_{K_{i}}(\gamma) \approx X_{K_{j}}(\gamma)$ for each $i, j \in \N$. 
\end{cor}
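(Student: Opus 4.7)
The plan is to apply Theorem \ref{thm:main} to concrete seed knots whose induced framings realize each of the three values $0$, $4$, $-4$, and then to import the mutual distinctness of the resulting twist families from prior work or from a mirror image argument.

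Concretely, I would use the knots $J_1$ and $J_2$ from Example \ref{ex:Jn}: $J_2$ with its first band presentation has $\varepsilon = 1$ and induced framing $4$, while $J_1$ with its first band presentation has $\varepsilon = 1$ and induced framing $0$. Writing $K_n$ for the knot obtained by applying an annulus twist $n$ times to the chosen seed, Theorem \ref{thm:main} immediately yields $X_{K_i}(\gamma) \approx X_{K_j}(\gamma)$ for all $i, j \in \N$ in these two cases. For $\gamma = -4$ I would take mirror images of the family constructed from $J_2$: mirroring sends $\varepsilon \mapsto -\varepsilon$ and negates the induced framing, and it commutes with the annulus twist construction (it merely reverses the sign of the twist slopes of $c_1, c_2$), so the mirrored family $K_i^{\ast}$ satisfies $X_{K_i^{\ast}}(-4) \approx X_{K_j^{\ast}}(-4)$, again by Theorem \ref{thm:main}. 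Alternatively, one could use $J_1$ with its second band presentation, which has $\varepsilon = -1$ and induced framing $-4$, and run the same distinctness argument in parallel.

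For distinctness of the $K_n$, the $\gamma = 0$ case is precisely the main result of the third author \cite{Omae1} using $J_1$, and the $\gamma = 4$ case is the main result of the fourth author \cite{Takeuchi1} using $J_2$; both build on the distinctness arguments of Osoinach and Teragaito, in which the $K_n$ are separated by an invariant insensitive to mirroring (for instance the Seifert genus, which grows with $|n|$). Consequently the mirror family in the $\gamma = -4$ case is also pairwise distinct and infinite. The main obstacle is only the distinctness verification: Theorem \ref{thm:main} supplies the $4$-manifold diffeomorphisms outright, and since distinctness is either cited directly for $\gamma \in \{0, 4\}$ or follows by the chirality-invariance of the distinguishing invariant for $\gamma = -4$, the corollary falls out without further calculation.
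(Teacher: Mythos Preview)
Your overall strategy is sound but differs from the paper's. The paper does not cite \cite{Omae1} and \cite{Takeuchi1} for distinctness; instead it gives a uniform self-contained argument for all three framings: it shows the augmented $3$-component link $L = J_m \cup c_1 \cup c_2$ is hyperbolic (via a tangle decomposition and \cite[Proposition~3.2]{Teragaito}), then applies Thurston's hyperbolic Dehn surgery theorem together with Neumann--Zagier volume monotonicity to conclude that the $K_n$ have strictly increasing hyperbolic volume for large $n$, hence are mutually distinct after reindexing. Your route---citing the prior theses for $\gamma \in \{0,4\}$ and then mirroring for $\gamma = -4$---is shorter and legitimate, but it outsources the substance, whereas the paper's argument is reusable for any seed $J_m$ and makes the corollary independent of those unpublished theses.

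One factual slip: your parenthetical that the $K_n$ are separated by Seifert genus ``which grows with $|n|$'' is false for the $J_1$ family. As the paper notes in the Remark following this corollary, every annulus twist of $J_1$ is fibered of genus~$2$, so genus distinguishes nothing there; the distinctness in Osoinach's and the paper's arguments comes from hyperbolic volume. (Genus does work for the $J_2$ family, where Takeuchi shows $g(K_n)=2n+2$.) This does not break your proof---the cited results stand on their own, and hyperbolic volume is mirror-invariant, so your $\gamma=-4$ step still goes through---but you should correct the stated reason.
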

\begin{proof}
Let $J_{m}$ be the knot introduced in Example~\ref{ex:Jn}. 
First we prove Corollary~\ref{cor:problem} for $\gamma = \pm 4$. 
Consider the band presentation of $J_{m}$ depicted in the left (resp.~right) side of the lower half of Figure~\ref{fig:Jm}. 
Assume that $m$ is even (resp.~odd). 
Then the induced framing $\gamma$ is $4$ (resp.~$\gamma = -4$). 
Let $L = J_{m} \cup c_1 \cup c_2$ be the augmented 3-component link associated to the band presentation as shown in Figure~\ref{fig:AugLink}. 
Then $L$ admits the tangle decomposition as shown in Figure~\ref{fig:decomp1}. 
Since this decomposition is same as that in~\cite[Section 3]{Teragaito}, $L$ is hyperbolic by \cite[Proposition 3.2]{Teragaito}. 
Let $K_{n}$ be the knot obtained from $J_{m}$ by applying an annulus twist $n$ times. 
The knot complement $S^3 - K_{n}$ is obtained from  $S^3 - L$  by  $(1 + 1/n)$-filling on $c_1$ and $(1 -1/n)$-filling on $c_2$. 
By Thurston's hyperbolic Dehn surgery theorem, $S^3 - K_{n}$ is hyperbolic for any large enough $n$. 
Furthermore, the volume of $S^3 - K_{n}$ monotonically increases to the volume of  $S^3 - L$ for any large enough $n$ \cite{Neumann}. 
Since the volume is a knot invariant, 
there exists a natural number $N$ such that $K_{i}$ and $K_{j}$ are mutually distinct for each $i, j \ge N$.
Finally, we redefine $K_{n}$ as the knot $K_{N+n}$. 
Then these knots are mutually distinct and $X_{K_{i}}(\gamma) \approx X_{K_{j}}(\gamma)$ for each $i, j \in \N$. 
Now we have completed the proof for the case where $\gamma = \pm 4$. 

Next we give the proof for the case where $\gamma = 0$. 
Consider the band presentation of $J_{m}$ depicted in the left side of the lower half of Figure~\ref{fig:Jm}, and assume that $m$ is odd. 
Then the induced framing $\gamma$ is $0$. 
In this case, the augmented 3-component link $L$ admits the tangle decomposition as shown in Figure~\ref{fig:decomp2}. 
The difference of Figures~\ref{fig:decomp1} and~\ref{fig:decomp2} is just glueing labels. 
In this case, we can easily check that the Conway polynomial of $L$ is non-zero. 
Thus $L$ is not a split link, and the exterior $E(L)$ is irreducible. 
We can also see that $E(L)$ is atoroidal and not Seifert fibered by the same argument in the proof of \cite[Proposition 3.2]{Teragaito}. 
Therefore $L$ is also hyperbolic in this case, and then the remaining proof is achieved by the same argument as the case where $\gamma = \pm 4$. 
Now we have completed the proof of Corollary~\ref{cor:problem}. 
\end{proof}
\begin{figure}[htb]
\includegraphics[width=1.0\textwidth]{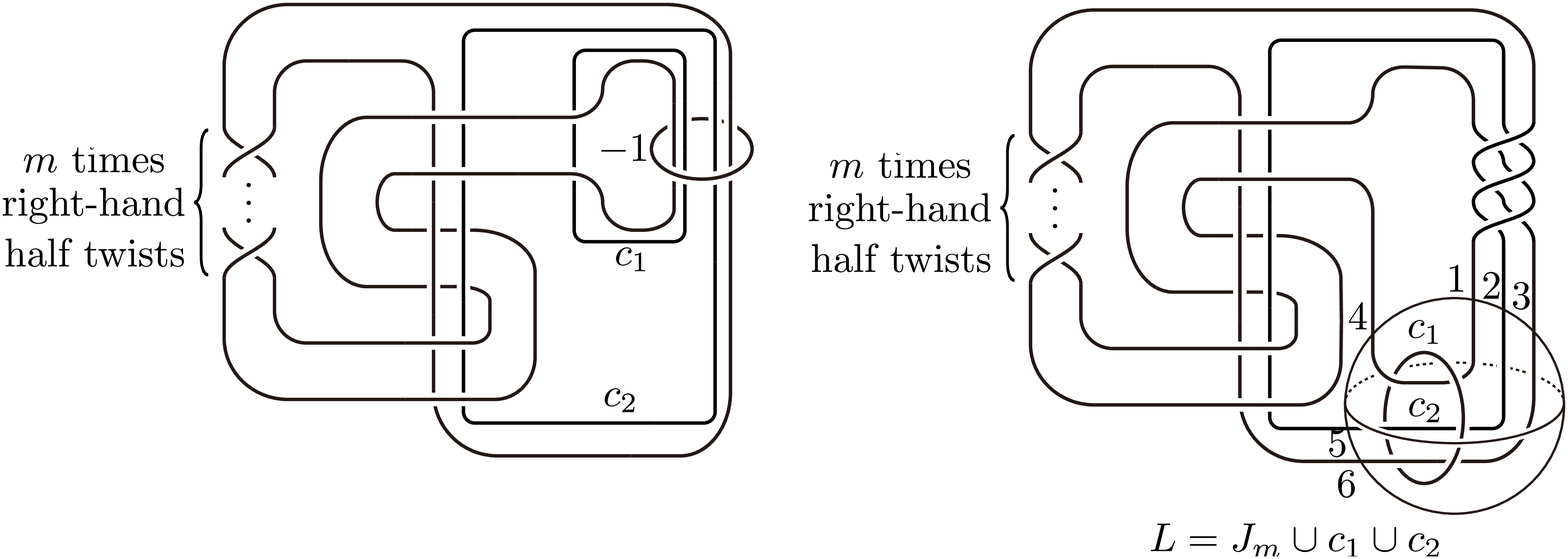}
\caption{The augmented 3-component link. }
\label{fig:AugLink}
\end{figure}
\begin{figure}[htb]
\includegraphics[width=.4\textwidth]{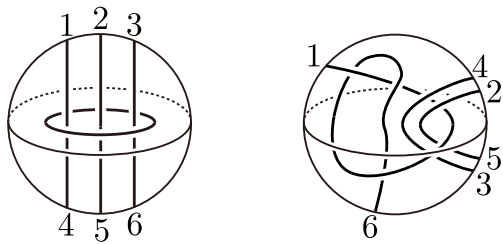}
\caption{Tangle decomposition in the case $\gamma = \pm 4$.}
\label{fig:decomp1}
\end{figure}
\begin{figure}[htb]
\includegraphics[width=.4\textwidth]{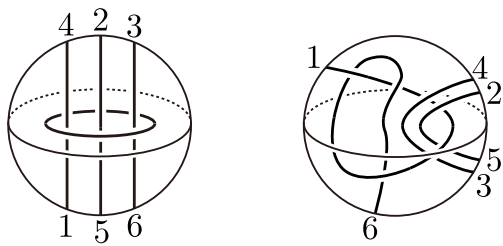}
\caption{Tangle decomposition in the case $\gamma = 0$.}
\label{fig:decomp2}
\end{figure}
\begin{rem}
Here recall that a given knot $J$ is fibered if and only if 
$M_{J}(0)$ is a surface bundle over $S^1$~\cite[Corollary 8.19]{Gabai}. 
Since $J_{1}$ is fibered,
the knots obtained from $J_{1}$ by applying annulus twists are fibered
and these genera are two.
In this case, the genus does not distinguish these knots.
On the other hand, though $J_{2}$ is fibered, 
it is not obvious whether  the knot $K_{n}$ obtained from $J_{2}$  by applying an annulus twist $n$ times is fibered.
In~\cite{Takeuchi1}, the fourth author proved that $K_n$ $(n \in \N)$ is a fibered knot by studying the complementaly sutured manifold of $K_n$, and showed that the genus of $K_n$ is $2n+2$.
Therefore the genus distinguishes the knots $K_n$. 
\end{rem}

\section{The $n$-shake genus and the $4$-ball genus}\label{sec:SakeGenus}

In this section, we study a relationship between the $n$-shake genus and the 4-ball genus of a knot.

\subsection{The shake genus}
In this subsection, 
we focus on the $0$-shake genus. 
We simply call the $0$-shake genus the {\it shake genus} of a knot $K$ and we denote it by $g_{s}(K)$.
Akbulut and Kirby \cite[Problem~1.41 (A)]{Kirby} asked whether $g_s(K) = g_{*}(K)$ 
and noted that these probably do not coincide. 
One of our motivations is to find a knot $K$ with $g_s(K) < g_{*}(K)$. 
The following is a corollary of Theorem~\ref{thm:main}.

\begin{cor}\label{cor:ShakeGenus} 
Let $K$ be a knot with a band presentation whose induced framing is $0$, 
$K_{n}$ the knot obtained from $K$ by applying an annulus twist $n$ times. 
Then, for each $i, j \in \mathbb{Z}$, 
$$g_{s}(K_{i})=g_{s}({K_{j}}) \, .$$ 
\end{cor}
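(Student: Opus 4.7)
The plan is essentially a one-step deduction from Theorem~\ref{thm:main} combined with the observation that the shake genus is a diffeomorphism invariant of the trace $X_K(0)$. Concretely, I would argue in two stages, the second of which is effectively instantaneous.

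First, I would verify the invariance claim carefully. Suppose $\phi\colon X_{K}(0) \to X_{K'}(0)$ is a diffeomorphism, and let $\Sigma \subset X_K(0)$ be a smoothly embedded closed surface of genus $g$ with $[\Sigma]$ generating $H_2(X_K(0);\mathbb{Z}) \cong \mathbb{Z}$. Then $\phi(\Sigma)$ is a smoothly embedded closed surface of the same genus $g$, and $\phi_{*}[\Sigma]$ is a generator of $H_2(X_{K'}(0);\mathbb{Z})$ because $\phi_{*}$ is an isomorphism of infinite cyclic groups. Taking the infimum of $g$ over all such $\Sigma$ and comparing both directions, this shows $g_s(K) = g_s(K')$ whenever $X_K(0) \approx X_{K'}(0)$.

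Second, I would invoke Theorem~\ref{thm:main} with $\gamma = 0$ to conclude that $X_{K_n}(0) \approx X_{K}(0)$ for every $n \in \mathbb{Z}$; combining this with Remark~\ref{rem:n-times} (so that each $K_j$ is itself obtained from $K_i$ by applying an annulus twist $j-i$ times) yields $X_{K_i}(0) \approx X_{K_j}(0)$ for every pair $i,j \in \mathbb{Z}$. The invariance established in the previous paragraph then gives $g_s(K_i) = g_s(K_j)$, as required.

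Since both ingredients are already available, I do not foresee a real obstacle: the substantive work is entirely absorbed into Theorem~\ref{thm:main}, and the only thing to be careful about is the homological bookkeeping that guarantees a generator is sent to a generator (hence a \emph{shaking} surface is sent to a shaking surface), which is immediate because $H_2$ is infinite cyclic.
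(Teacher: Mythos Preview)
Your proposal is correct and matches the paper's approach exactly: the paper states this result simply as ``a corollary of Theorem~\ref{thm:main}'' without further argument, and your two-step write-up (diffeomorphism invariance of $g_s$ plus Theorem~\ref{thm:main}) just spells out the intended deduction.
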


In the following, 
we observe a property of knots constructed by annulus twists
(Proposition~\ref{prop:un}). 
Recall that a knot $K \subset S^3$ is said to be {\it slice} if $g_{*}(K)=0$. 
We say that a knot $K \subset S^3$ is {\it homotopically slice} if there exists a homotopy $4$-ball $W$ with $\partial W \approx S^3$ such that $K$ bounds a smoothly embedded disk in $W$.

\begin{lem} \label{lem:homotopy-4ball}
Let $K$ and $K'$ be knots such that $M_{K}(0)  \approx M_{K'}(0)$.
If $K$ is slice, then $K'$ is homotopically slice. 
\end{lem}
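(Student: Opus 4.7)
The plan is: let $D$ be a smooth slice disk for $K$ in $D^4$ and set $V := D^4 \setminus \text{int}(\nu D)$. By the hypothesis $M_K(0) \approx M_{K'}(0)$, I can glue $V$ to $X_{K'}(0)$ along its boundary to form a closed smooth $4$-manifold; I will show this is a homotopy $S^4$, and that deleting the interior of the $0$-handle of $X_{K'}(0)$ from it produces a smooth homotopy $4$-ball in which $K'$ bounds a disk.

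The key properties of $V$ are $\partial V \approx M_K(0)$ (because the slice disk has trivial normal bundle, so its induced framing is the $0$-framing), $H_\ast(V) \cong H_\ast(S^1)$, and $\pi_1(V)$ is normally generated by a meridian of $D$, which may be chosen to lie in $\partial V$; the last statement uses the codimension-$2$ general position principle that the complement of a codimension-$2$ submanifold in a simply connected manifold has fundamental group normally generated by meridians. Using the given diffeomorphism $\varphi \colon M_{K'}(0) \to M_K(0)$, I form
$$W := X_{K'}(0) \cup_{\varphi} V.$$
Van Kampen, combined with $\pi_1(X_{K'}(0)) = 1$, gives $\pi_1(W) \cong \pi_1(V)/N\bigl(\iota_\ast \pi_1(\partial V)\bigr)$, where $\iota \colon \partial V \hookrightarrow V$ is inclusion; this quotient is trivial because the image of $\iota_\ast$ contains the meridian of $D$, which normally generates $\pi_1(V)$. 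A Mayer--Vietoris computation then yields $H_\ast(W) \cong H_\ast(S^4)$, the main input being that the generator of $H_2(M_{K'}(0))$ (the capped Seifert surface) maps to the generator of $H_2(X_{K'}(0)) \cong \Z$.

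Finally, let $W_0$ be the result of deleting the interior of the $0$-handle $D^4 \subset X_{K'}(0) \subset W$. Then $\partial W_0 = \partial D^4 = S^3 \supset K'$, the core of the $2$-handle $h^2_{K'}$ is a smooth disk in $W_0$ bounded by $K'$, and a further Mayer--Vietoris argument together with $\pi_1(W_0) = \pi_1(W) = 1$ shows $W_0$ is contractible. Hence $W_0$ is a smooth homotopy $4$-ball in which $K'$ bounds a disk, proving $K'$ is homotopically slice. The main obstacle is the $\pi_1$ calculation for $W$: while the homological statements about $V$ are routine, $\pi_1(V)$ is in general much larger than $\Z$ (for a non-ribbon slice disk), so one genuinely needs the general position principle above, plus the observation that the relevant meridians descend from $\partial V$, in order for the amalgamation with $\pi_1(X_{K'}(0)) = 1$ to kill everything.
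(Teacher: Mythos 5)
Your proof is correct and is essentially the paper's argument: both glue the slice disk complement $V = D^4 \setminus \nu D$ to the $0$-surgered data via the given diffeomorphism, both rest on the fact that $\pi_1(V)$ is normally generated by a meridian (you prove it by general position, the paper by a handle decomposition of the slice complement) combined with van Kampen and a routine homology computation, and both exhibit the disk for $K'$ as a $2$-handle (co)core. Indeed your $W_0 = \left( X_{K'}(0) \cup_{\varphi} V \right) \setminus \mathrm{int}(D^4)$ is exactly the paper's homotopy $4$-ball $\left( M_{K'}(0) \times [-1,1] \right) \cup_{k} \left( D^4 \setminus \nu D^2 \right) \cup h^2$ with the $0$-framed $2$-handle along the meridian $\mu'$ viewed upside down, so the two constructions coincide.
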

\begin{proof}
Since $K$ is slice, 
there exists a  properly embedded disk  $D^2$ in $D^4$ such that $\partial D^2=K$
and let  $\nu D^2$ be an open tubular neighborhood of $D^2$ in $D^4$. 
Notice that $\partial (D^4 \setminus \nu D^2) \approx M_{K}(0)$.
Let $k$ be a diffeomorphism from $M_{K}(0)$ to $M_{K'}(0) $ and  we regard $\partial (D^4 \setminus \nu D^2) $ as $M_{K'}(0)$ via $k$.
In other words,  we consider the $4$-manifold 
\[ (M_{K'}(0) \times [-1, 1])  \cup _{k} (D^4 \setminus \nu D^2),\]
where $\partial (D^4 \setminus \nu D^2) \approx M_{K}(0)$ and
$M_{K'}(0) \times \{-1\} \approx M_{K'}(0)$ are identified by $k$. 
Taking a meridian $\mu'$ of $K'$, add a $2$-handle along 
$\mu' \subset M_{K'}(0) \approx  M_{K'}(0) \times \{1\} = \partial ((M_{K'}(0) \times [-1, 1])  \cup _{k} (D^4 \setminus \nu D^2))$
with the framing $0$. 
We denote by $W$ the resulting $4$-manifold. 
Then we see that $\partial W  \approx S^3$, 
and $K'$ is isotopic to the boundary of the cocore disk of the $2$-handle attached along $\mu'$. 
Thus $K'$ bounds the cocore disk in $W$, that is, a smoothly embedded disk in $W$. 

We show that $W$ is a homotopy 4-ball, 
i.e., $\pi_{*}(W) \simeq \pi_{*}(D^4)$.
To prove this, we show that $W$ is a homology 4-ball, i.e., $H_{*}(W) \simeq H_{*}(D^4)$, and $\pi _{1}(W)$ is trivial. 
First we prove that $H_{*}(W) \simeq H_{*}(D^4)$. 
We consider the inclusion map
\[ i:  (S^3 \setminus \nu K') \to (M_{K'}(0) \times [-1, 1])  \cup _{k} (D^4 \setminus \nu D^2), \]
 where $\nu K'$ is an open tubular  neighborhood of $K'$ in $S^3$.
Then an elementary homological argument tells us that  $i_{*}$ is an isomorphism.
Since  $H_{1}(S^3 \setminus \nu K')$ is generated by $\mu'$, 
$H_{1}((M_{K'}(0) \times [-1, 1])  \cup _{k} (D^4 \setminus \nu D^2))$ is also generated by $\mu'$. 
Adding the $2$-handle along $\mu'$ kills $H_{1}\left ( \left( M_{K'}(0) \times [-1, 1] \right)  \cup _{k} \left( D^4 \setminus \nu D^2 \right) \right)$ and 
therefore $W$ is a homology 4-ball. 

Next, we prove that $\pi _{1}(W)$ is trivial. 
By considering 
a handle decomposition of $D^4 \setminus \nu D^2$,
we obtain that $\pi _{1} (D^4 \setminus \nu D^2)$ is normally generated by a meridian $\mu$
of $K$.
Therefore 
 $\pi _{1} \left( \left( M_{K'}(0) \times [-1, 1] \right)  \cup _{k} \left( D^4 \setminus \nu D^2 \right) \right)$ is normally generated by $\mu '$. 
By the van Kampen theorem, $\pi _{1}(W)$ is trivial.
Since $H_{*}(W) \simeq H_{*}(D^4)$, $W$ is a homotopy 4-ball. 
\end{proof}

By Proposition \ref{prop:3-diffeo} and Lemma~\ref{lem:homotopy-4ball}, 
we have the following.

\begin{prop} \label{prop:un}
Let $K$ be a slice knot admitting 
a band presentation whose induced framing is $0$ and 
$K_{n}$ the knot obtained from $K$ by applying an annulus twist $n$ times.
Then there exists a homotopy $4$-ball $W_{n}$ with $\partial W_{n}=S^3$
such that $K_{n}$ bounds a smoothly embedded disk in $W_{n}$.
In particular, we can associate a homotopy $4$-sphere for each $n \in \Z$.
\end{prop}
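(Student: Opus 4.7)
The plan is to bootstrap from the two earlier results in this section, namely Proposition~\ref{prop:3-diffeo} and Lemma~\ref{lem:homotopy-4ball}.

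First I would observe that, since $K$ admits a band presentation whose induced framing is $0$ and $K_n$ is obtained from $K$ by applying an annulus twist $n$ times, Proposition~\ref{prop:3-diffeo} gives a diffeomorphism $M_K(0) \approx M_{K_n}(0)$ for every $n \in \Z$. Because $K$ is slice by hypothesis, Lemma~\ref{lem:homotopy-4ball} applies to the pair $(K, K_n)$ and produces a homotopy $4$-ball $W_n$ with $\partial W_n \approx S^3$ in which $K_n$ bounds a smoothly embedded disk. This establishes the main clause of the proposition directly, with no further work required.

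For the ``in particular'' assertion, I would cap $W_n$ off by the standard $4$-ball: set $\Sigma_n := W_n \cup_{S^3} D^4$, where the gluing identifies $\partial W_n$ with $\partial D^4$ via any diffeomorphism of $S^3$ (a canonical choice is available from the construction of $W_n$ in the proof of Lemma~\ref{lem:homotopy-4ball}). A short van Kampen / Mayer--Vietoris computation, using $\pi_1(W_n) = 1$ and $H_*(W_n) \cong H_*(D^4)$ from the proof of the lemma, shows that $\Sigma_n$ is a closed, simply connected $4$-manifold with the homology of $S^4$, hence by Whitehead a homotopy $4$-sphere. This gives the promised homotopy $4$-sphere associated to each $n \in \Z$.

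Since Proposition~\ref{prop:3-diffeo} and Lemma~\ref{lem:homotopy-4ball} do essentially all of the geometric work, there is no serious obstacle remaining; the only point requiring any care is verifying that the capped-off manifold $\Sigma_n$ is indeed a homotopy $S^4$, but this is a routine consequence of the homotopy-ball properties of $W_n$ already produced. The main conceptual content of the statement lies in the fact that an annulus twist preserves the slice-in-a-homotopy-ball property, which is already packaged into the lemma.
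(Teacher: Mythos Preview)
Your argument is correct and matches the paper's approach exactly: the paper simply records that the proposition follows from Proposition~\ref{prop:3-diffeo} and Lemma~\ref{lem:homotopy-4ball}, without further comment. Your additional elaboration of the ``in particular'' clause (capping off $W_n$ with $D^4$) is the natural reading the authors intend but leave implicit.
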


By Lemma~\ref{lem:band-presentation}, an unknotting number one knot admits a band presentation. 
Furthermore we may assume that the induced framing of the band presentation is $0$ as in Example~\ref{ex:Jn}. 
Therefore we have the following. 

\begin{prop} \label{prop:un2}
We can obtain homotopically slice knots and homotopy $4$-spheres from a slice knot with unknotting number one.
\end{prop}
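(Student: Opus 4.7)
The plan is to deduce Proposition~\ref{prop:un2} directly from Lemma~\ref{lem:band-presentation} (together with Example~\ref{ex:Jn}) and Proposition~\ref{prop:un}. Let $K$ be a slice knot with unknotting number one. The first step is to invoke Lemma~\ref{lem:band-presentation} to produce a band presentation $(A,b,c,\varepsilon)$ of $K$: by assumption there is a crossing of some diagram of $K$ whose change unknots $K$, and attaching a band across that crossing yields the required band presentation. The substantive task in this step is to upgrade the conclusion so that the induced framing $\gamma$ is exactly $0$, i.e.\ so that $A\cup b(I\times I)$ is orientable as an immersed surface in $M_{c}(\varepsilon)\approx S^{3}$.

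To achieve $\gamma=0$, I would argue locally at the unknotting crossing. There are two natural choices for the twisting of the band $b$ relative to the annulus $A$: one choice makes $A\cup b(I\times I)$ orientable (so $\gamma=0$), while the other makes it non-orientable (so $\gamma=\pm 4$). Example~\ref{ex:Jn} already illustrates this dichotomy for $J_{m}$, where the two displayed band presentations realize $\gamma\in\{0,\pm 4\}$ according to the parity of $m$, and for each $m$ one of the two choices does give $\gamma=0$. The same reasoning applies to an arbitrary unknotting number one knot by a purely local analysis in a neighborhood of the crossing, with the orientation-preserving choice of band depending only on the sign of the crossing and on the direction of the half-twist of the band.

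Once a band presentation of $K$ with $\gamma=0$ is in hand, the remainder is immediate from Proposition~\ref{prop:un}: for every $n\in\Z$ the knot $K_{n}$ obtained from $K$ by applying the annulus twist $n$ times is homotopically slice, bounding a smoothly embedded disk in a homotopy $4$-ball $W_{n}$ with $\partial W_{n}\approx S^{3}$, and the associated homotopy $4$-sphere is the one produced in Proposition~\ref{prop:un} (assembled, as in the proof of Lemma~\ref{lem:homotopy-4ball}, by gluing $W_{n}$ to the exterior of the slice disk of $K$ along $M_{K_{n}}(0)\approx M_{K}(0)$). The only non-trivial point of the whole argument is the orientability claim in step two; I expect it to reduce to a routine local verification, so no genuine obstruction arises.
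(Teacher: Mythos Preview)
Your proposal is correct and follows essentially the same route as the paper: invoke Lemma~\ref{lem:band-presentation} to obtain a band presentation of the unknotting number one slice knot $K$, observe (via the local dichotomy at the crossing, as illustrated in Example~\ref{ex:Jn}) that one may arrange the induced framing to be $0$, and then apply Proposition~\ref{prop:un}. One small remark: your parenthetical description of the associated homotopy $4$-sphere is slightly garbled, since $W_{n}$ already \emph{contains} the slice-disk exterior of $K$ by construction; the homotopy $4$-sphere is simply $W_{n}\cup_{S^{3}} D^{4}$.
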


Let $K$ be a knot with a band presentation such that the induced framing is $0$, 
and $K'$ the knot obtained from $K$ by a single annulus twist. 
Then $g_{s}(K)=g_s(K')$ by Corollary~\ref{cor:ShakeGenus}.
If  $g_{*}(K) \neq g_*(K')$, then we obtain an answer to Akbulut and Kirby's problem.
However, it  seems that $g_{*}(K)=g_*(K')$ by the following observation. 

First, since $K$ can be transformed into the unknot by two band surgeries, 
we see that  $g_{*}(K) \le 1$.
Suppose that  $K$ is slice.
Then $K'$ is homotopically slice by Lemma~\ref{lem:homotopy-4ball}. 
If the smooth Poincar\'e conjecture in dimension four is true, 
then $K'$ is  slice, and then $g_*(K) = g_*(K')=0$. 
Suppose that $g_*(K) = 1$.
Then it is likely that $g_*(K') = 1$ since if $K'$ is slice then
$K$ is homotopically slice by Lemma~\ref{lem:homotopy-4ball},
which produce a counterexample for  
the smooth Poincar\'e conjecture in dimension four.
Consequently, if the smooth Poincar\'e conjecture in dimension four is true, then $g_*(K) = g_*(K')$. 
Based on the above observations we propose the following. 

\begin{conj}\label{conj:AJ}
Let $K$ be a knot with a band presentation whose induced framing is $0$, 
and $K_{n}$ the knot obtained from $K$ by applying an annulus twists $n$ times. 
Then $g_{*}(K)=g_{*}({K_{n}})$.  
\end{conj}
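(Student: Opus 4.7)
The plan is to extend the construction of Lemma~\ref{lem:homotopy-4ball} from slice disks to oriented surfaces of arbitrary genus, obtaining a proof of Conjecture~\ref{conj:AJ} conditional on the smooth four-dimensional Poincar\'e conjecture (SPC4). Suppose $g_{*}(K) = g$ is witnessed by a smoothly and properly embedded oriented genus-$g$ surface $\Sigma \subset D^{4}$ with $\partial \Sigma = K$. First I would form the $4$-manifold
$$\widetilde{W}_{n} \;:=\; \bigl(M_{K_{n}}(0) \times [-1,1]\bigr) \cup_{k} \bigl(D^{4} \setminus \nu \Sigma\bigr),$$
where $k : M_{K}(0) \to M_{K_{n}}(0)$ is the diffeomorphism from Proposition~\ref{prop:3-diffeo}, and attach a $2$-handle along a meridian $\mu'$ of $K_{n}$ in $M_{K_{n}}(0) \times \{1\}$ with framing $0$. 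The surface $\Sigma$ persists inside $D^{4} \setminus \nu \Sigma$ as its ``missing'' core and extends across the product cobordism to cap off on the cocore of the $2$-handle, producing a smoothly embedded genus-$g$ surface in $\widetilde{W}_{n}$ with boundary $K_{n} \subset S^{3} = \partial \widetilde{W}_{n}$.

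Next I would verify that $\widetilde{W}_{n}$ is a homotopy $4$-ball by mimicking the Mayer--Vietoris and Seifert--van Kampen arguments in the proof of Lemma~\ref{lem:homotopy-4ball}. A subtlety arises because for $g > 0$ the boundary $\partial(D^{4} \setminus \nu \Sigma)$ is not $M_{K}(0)$ but its generalization, obtained by gluing the knot exterior $E(K)$ to the circle bundle $\Sigma \times S^{1}$; I would handle this by first \emph{compressing} $\Sigma$ in $D^{4}$ along $g$ embedded compressing disks that realise a symplectic basis of $H_{1}(\Sigma)$, which trades the surface for a slice disk at the cost of inserting auxiliary $2$-handles that cancel in the final manifold. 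The reverse inequality $g_{*}(K) \le g_{*}(K_{n})$ comes from applying the same construction with $K$ and $K_{n}$ interchanged, which is legitimate because annulus twists are invertible ($K = (K_{n})_{-n}$). Granted SPC4, each homotopy $4$-ball is $D^{4}$ and Conjecture~\ref{conj:AJ} follows.

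The main obstacle is exactly SPC4: as the authors already observe in the slice case, an unconditional proof of $g_{*}(K) = g_{*}(K_{n})$ would already determine whether the homotopy $4$-spheres produced by Proposition~\ref{prop:un2} are standard, which is of Poincar\'e-conjecture difficulty. A reasonable unconditional substitute would be to verify that concordance invariants providing lower bounds for $g_{*}$ --- most naturally $\tau(K)$ and $s(K)/2$ --- coincide on $K$ and $K_{n}$, either by producing a genuine concordance in $S^{3} \times I$ from the Kirby moves used in the proof of Theorem~\ref{thm:main}, or by showing these invariants are preserved by the $4$-manifold diffeomorphism $X_{K}(0) \approx X_{K_{n}}(0)$ itself. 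Matching lower bounds, combined with whatever upper bounds one can extract from explicit band presentations, would unconditionally verify the conjecture for specific families such as the knots $J_{m}$ of Example~\ref{ex:Jn}.
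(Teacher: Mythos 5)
You should first note that this statement is a \emph{conjecture} in the paper: the authors do not prove it, and what precedes it in Section~3 is only a heuristic argument conditional on the smooth $4$-dimensional Poincar\'e conjecture. That heuristic exploits a fact your write-up overlooks: any knot with a band presentation can be transformed into the unknot by two band surgeries, so $g_{*}(K) \le 1$, and likewise $g_{*}(K_{n}) \le 1$, since each $K_{n}$ again carries a band presentation $(A, b_{n}, c, \varepsilon)$. Hence the only question is whether sliceness is preserved, and Lemma~\ref{lem:homotopy-4ball} together with SPC4 settles exactly that: under SPC4, homotopically slice is equivalent to slice (Cerf's theorem upgrades a diffeomorphism $W \approx D^{4}$ to honest sliceness in the standard ball), and Proposition~\ref{prop:3-diffeo} makes the relation symmetric between $K$ and $K_{n}$. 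Your conditional conclusion is therefore recoverable, but by this short dichotomy on $g_{*} \in \{0,1\}$ rather than by your genus-$g$ machinery.

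The genus-$g$ machinery itself has a fatal gap at the compression step. You propose to compress $\Sigma \subset D^{4}$ along $g$ embedded disks realizing a symplectic basis of $H_{1}(\Sigma)$, trading the surface for a slice disk. Such disks do not exist in general: if they did, every surface realizing $g_{*}(K) = g > 0$ could be ambiently compressed to a properly embedded disk bounded by $K$, forcing $g_{*}(K) = 0$, a contradiction. A minimal-genus surface in $D^{4}$ admits no such compressing disks for precisely the reason the $4$-ball genus is a nontrivial invariant. (Your observation that $\partial(D^{4} \setminus \nu\Sigma)$ is no longer $M_{K}(0)$ when $g > 0$ is correct, but the proposed fix is wrong --- and, as explained above, unnecessary in the conjecture's setting.) Your unconditional fallback, matching $\tau$ and $s$ on $K$ and $K_{n}$, is a sensible research direction but is not carried out and does not follow from anything in the paper: neither invariant is known to be determined by the diffeomorphism type of $X_{K}(0)$ or by the homeomorphism type of $M_{K}(0)$, and the Kirby moves in the proof of Theorem~\ref{thm:main} do not produce a concordance in $S^{3} \times I$. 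In sum, the statement remains a conjecture both in the paper and after your attempt; your conditional-on-SPC4 claim is correct in spirit but should be routed through the paper's $g_{*} \le 1$ dichotomy and Lemma~\ref{lem:homotopy-4ball}, not through the flawed compression argument.
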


\subsection{The $n$-shake genus for $n \ne 0$}

In this subsection, we consider the $n$-shake genus of a knot for $n \ne 0$. 
Let $K_{n,m}$ be the knot depicted in the left side of Figure~\ref{fig:OmaeAndRm}, 
where the shaded rectangle labeled $2m+1$ represents $2m+1$ times left-hand half twists as shown in Figure~\ref{fig:HalfTwists}. 
Note that   $K_{n,m}$ is a generalization of  knots in \cite{Akbulut2, Akbulut1},  \cite{Lickorish}, and \cite{Omae1}.
Then we have the following. 

\begin{figure}[htb]
\includegraphics[width=.9\textwidth]{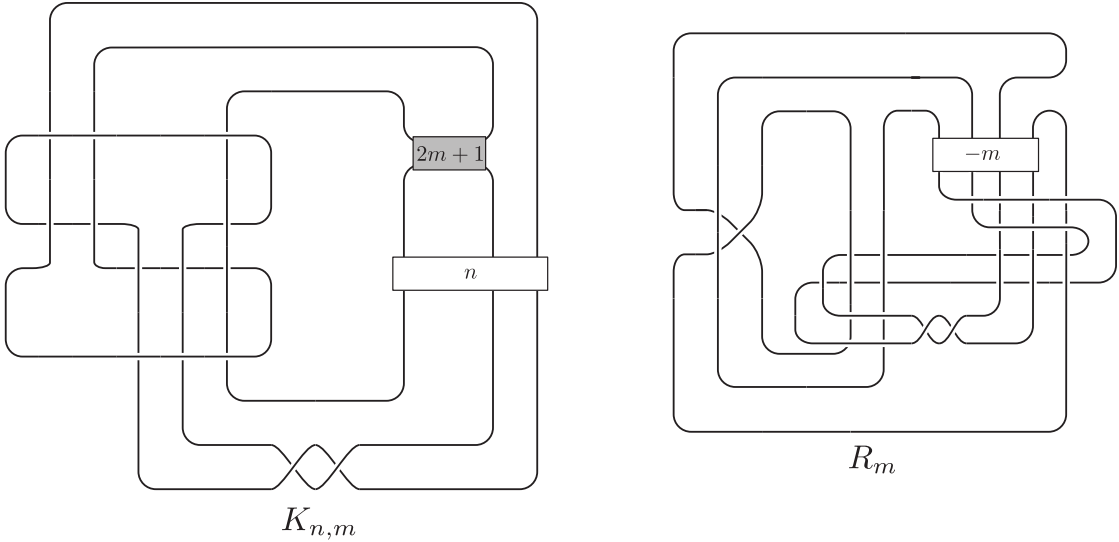}
\caption{The knots $K_{n,m}$ and $R_m$. }\label{fig:OmaeAndRm}
\end{figure}
\begin{figure}[htb]
\includegraphics[width=.25\textwidth]{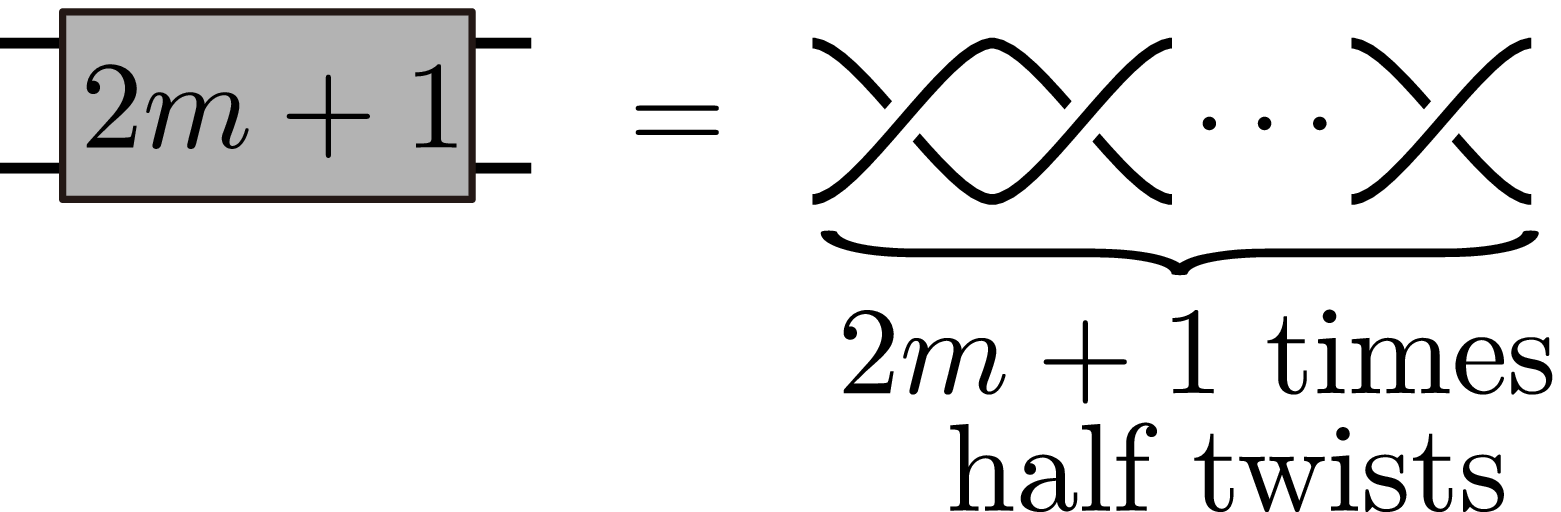}
\caption{}\label{fig:HalfTwists}
\end{figure}

\begin{thm} \label{thm:Omae}
For integers $n \neq 0$ and $m \ge 0$,  
\[ g_{s}^{n}(K_{n,m} )=0\ \text{and}\ g_{*}(K_{n,m})=1 \, . \]
\end{thm}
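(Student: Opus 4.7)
The statement splits into two claims, namely $g_s^n(K_{n,m}) = 0$ and $g_*(K_{n,m}) = 1$, which I would prove independently.

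For $g_s^n(K_{n,m}) = 0$, the plan is to give a Kirby-calculus argument exhibiting an embedded $2$-sphere in $X_{K_{n,m}}(n)$ whose homology class generates $H_2$. From Figure~\ref{fig:OmaeAndRm}, $K_{n,m}$ contains a distinguished rectangle of $n$ full twists, and the $n$-framing of the $2$-handle is evidently tuned to interact with those twists. Concretely, I would introduce a $\mp 1$-framed meridional unknot by a blowup, slide the $n$-framed $2$-handle across this new unknot so that the $n$ twists are untwisted, and then isotope and blow down. If the moves are arranged correctly, the resulting Kirby diagram is just the $n$-framed unknot, whose trace is a $D^2$-bundle over $S^2$; the zero-section is a $2$-sphere generating $H_2$. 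Tracing this sphere back through the diffeomorphism yields a sphere in $X_{K_{n,m}}(n)$ generating $H_2$, so $g_s^n(K_{n,m}) = 0$.

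For the upper bound $g_*(K_{n,m}) \le 1$, I would locate an explicit band in the planar diagram of $K_{n,m}$ whose band surgery produces the unknot; pushing this into $D^4$ gives a genus-$1$ surface bounded by $K_{n,m}$. Such a band should be visible near the clasp in Figure~\ref{fig:OmaeAndRm}, since a single crossing change there (realized by a band move) plausibly unknots the diagram.

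For the lower bound $g_*(K_{n,m}) \ge 1$, I have to show $K_{n,m}$ is not slice, and the auxiliary knot $R_m$ in Figure~\ref{fig:OmaeAndRm} is presumably included for this purpose. The natural approach is a classical slice obstruction: the Fox--Milnor condition on $\Delta_{K_{n,m}}(t)$, or the signature $\sigma(K_{n,m})$. I expect $K_{n,m}$ and $R_m$ to share Alexander polynomial or to be related by a concordance that identifies the signature, so the problem reduces to a computation on the simpler knot $R_m$ (which looks like a two-bridge or pretzel-type knot, making the invariants tractable). This non-sliceness step is the main obstacle: the Kirby picture for shake-sliceness is essentially mechanical once drawn, and the genus-$1$ surface is concrete, but ruling out sliceness uniformly in $m$ and showing equality $g_* = 1$ rather than merely $g_* \ge 1$ is where the argument needs a genuine invariant. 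If the signature or Fox--Milnor obstruction fails for some $m$, one would have to fall back on Ozsv\'ath--Szab\'o's $\tau$ or the Rasmussen $s$-invariant to complete the lower bound.
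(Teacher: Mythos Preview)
Your overall architecture is right (Kirby calculus for the shake-genus, an explicit surface for the upper bound, Fox--Milnor for the lower bound), but two of your specific guesses are backwards, and they matter.

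First, the Kirby calculus does \emph{not} terminate at the $n$-framed unknot. After you strip off the box of $n$ full twists the diagram still carries the $2m+1$ half-twists and the rest of the pattern; what you end up with is the $n$-framed $R(m)$, i.e.\ $X_{K_{n,m}}(n)\approx X_{R(m)}(n)$. In particular $X_{K_{n,m}}(n)$ is generally \emph{not} a disk bundle over $S^2$. The reason this still gives $g_s^n(K_{n,m})=0$ is that $R(m)$ is a \emph{ribbon} knot: capping its slice disk with the core of the $2$-handle produces the sphere you want. So $R(m)$ is precisely the auxiliary knot used for the shake-genus half, not for the $4$-ball genus half.

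Second, and relatedly, your plan to reduce the non-sliceness of $K_{n,m}$ to an invariant of $R_m$ cannot work: $R_m$ is slice, so any concordance or slice-type invariant you transfer from $R_m$ will vanish. The paper instead computes $\Delta_{K_{n,m}}(t)$ directly from a Seifert matrix (Lemma~\ref{lem:Alexander}) and observes that its constant term equals $-(1+6m)<0$. A short consequence of Fox--Milnor (Lemma~\ref{lem:SliceObst}: the constant term of a slice knot's Alexander polynomial is a sum of squares, hence nonnegative) then rules out sliceness uniformly in $n\neq 0$ and $m\ge 0$; no recourse to $\tau$ or $s$ is needed. For the upper bound the paper uses two band surgeries from $K_{n,m}$ to the unknot, which yields $g_*(K_{n,m})\le 1$; your single crossing change would also suffice if it is actually available, but that is something to verify in the diagram rather than assume.
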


We obtain the following immediately as a corollary of Theorem~\ref{thm:Omae}. 

\begin{cor}\label{cor:nShakeGenus}
For each integer  $n \neq 0$, 
there exist infinitely many knots $K_1, K_2, \dots$ such that 
$ g_{s}^{n}(K_i) < g_{*}(K_i)$ for any $i \in \N$. 
\end{cor}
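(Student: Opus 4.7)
The plan is essentially immediate once Theorem~\ref{thm:Omae} is in hand. Fix an integer $n \neq 0$ and consider the family $\{K_{n,m}\}_{m\ge 0}$ introduced in Theorem~\ref{thm:Omae}. That theorem already supplies the strict inequality $g_{s}^{n}(K_{n,m}) = 0 < 1 = g_{*}(K_{n,m})$ for every $m \ge 0$, so the only remaining task is to extract an infinite subfamily of pairwise non-isotopic knots and relabel it as $K_1, K_2, \ldots$.

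For the distinctness, I would use a classical knot invariant. Since the parameter $m$ controls $2m+1$ half-twists inside the identifiable local tangle visible in Figure~\ref{fig:OmaeAndRm}, a direct Seifert-matrix computation on the obvious genus-one Seifert surface suggested by that diagram should produce an Alexander polynomial $\Delta_{K_{n,m}}(t)$ whose coefficients vary (in fact linearly) with $m$. Consequently $\Delta_{K_{n,m}}(t) \neq \Delta_{K_{n,m'}}(t)$ for all but finitely many pairs $(m,m')$, and after discarding finitely many terms one extracts a strictly increasing sequence $m_1 < m_2 < \cdots$ for which the knots $K_i := K_{n,m_i}$ are pairwise distinct.

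An alternative, perhaps cleaner, route is a hyperbolic-volume argument modeled on the proof of Corollary~\ref{cor:problem}: realize the family $\{K_{n,m}\}$ as $1/m$-Dehn fillings on an auxiliary unknot encircling the twist region, so that all of these knots arise from fillings on a single fixed link exterior $S^{3} \setminus L_{n}$. If $L_{n}$ is hyperbolic (verifiable by a tangle decomposition along the same lines as Figures~\ref{fig:decomp1} and~\ref{fig:decomp2}), then by Thurston's hyperbolic Dehn surgery theorem and the monotonicity of volume under $1/m$-filling, the complements $S^{3} \setminus K_{n,m}$ are hyperbolic with pairwise distinct volumes for all sufficiently large $m$, giving the desired infinite family.

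The main obstacle in either route is this distinctness verification; everything else is formal, since the two genus values $0$ and $1$ come packaged directly out of Theorem~\ref{thm:Omae}. The polynomial approach is elementary but requires picking the Seifert surface carefully enough to keep the matrix computation tractable, while the volume approach trades that linear algebra for a hyperbolicity check on $L_{n}$; I expect the latter to be the more robust of the two, because it parallels arguments the paper has already carried out.
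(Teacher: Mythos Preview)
Your proposal is correct and matches the paper's approach: the paper declares the corollary immediate from Theorem~\ref{thm:Omae}, with the distinctness of the $K_{n,m}$ (for fixed $n$ and varying $m$) coming implicitly from the explicit Alexander polynomials already computed in Lemma~\ref{lem:Alexander}, whose coefficients are visibly linear and nonconstant in $m$. So there is no need to redo any Seifert-matrix calculation or to run the hyperbolic-volume argument; one small correction is that the obvious Seifert surface is not of genus one for general $n$ (the breadth of $\Delta_{K_{n,m}}$ in Lemma~\ref{lem:Alexander} grows with $|n|$), but this is moot once you simply cite that lemma.
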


To prove Theorem~\ref{thm:Omae}, we give two lemmas. 
We normalize the Alexander polynomial $\Delta_K(t)$ of a knot $K$ so that 
$\Delta_K(t^{-1}) = \Delta_K(t)$ and $\Delta_K(1) = 1$.

\begin{lem}\label{lem:Alexander}
Let $n$ be a positive integer. Then for any $m \ge 0$, 
\begin{align*}
\Delta_{K_{n,m}}(t) =
& -(1+6m) + (2+4m)(t + t^{-1})-(1+m)(t^2 + t^{-2}) +m (t^{n-2} + t^{-(n-2)}) \\ 
& -(1+3m) (t^{n-1} + t^{-(n-1)}) +(2+3m)(t^n + t^{-n})-(1+m) (t^{n+1} + t^{-(n+1)})
\end{align*}
and 
\begin{align*}
\Delta_{K_{-n,m}(t)}= 
&-(1+6m) + (2+4m)(t + t^{-1}) -(1+m)(t^2 + t^{-2}) -(1+m) (t^{n-1} + t^{-(n-1)}) \\
& +(2+3m)(t^n + t^{-n}) -(1+3m) (t^{n+1} + t^{-(n+1)}) +m (t^{n+2} + t^{-(n+2)}) \, .
\end{align*}
\end{lem}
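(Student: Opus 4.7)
My plan is to compute $\Delta_{K_{n,m}}(t)$ directly from a Seifert matrix. First, I would construct a Seifert surface $S_{n,m}$ for $K_{n,m}$ by applying Seifert's algorithm to the diagram in Figure~\ref{fig:OmaeAndRm} (or by building the surface by hand from the structure of the two twist regions). Since the stated Alexander polynomial has degree $n+1$ in the $K_{n,m}$ case and $n+2$ in the $K_{-n,m}$ case, the resulting surface should have genus roughly $n+1$ (or $n+2$), giving a Seifert matrix $V$ whose size is controlled by $n$ alone. I would then choose a basis of $H_{1}(S_{n,m})$ adapted to the two twist regions: one family of generators threading the $n$-twist box and the other family threading the $(2m+1)$-half-twist box.

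Next, by tabulating the linking numbers of these basis cycles with their positive push-offs, I would write down $V$ explicitly. The expectation is that $V$ has a banded, essentially tridiagonal, form. The entries in the block coming from the $n$-twist region should be $\pm 1$ and are responsible for the high-degree terms $t^{\pm(n-2)},t^{\pm(n-1)},t^{\pm n},t^{\pm(n+1)}$, while the block coming from the $(2m+1)$-half-twist region should contain a single entry scaling linearly in $m$. This linear $m$-dependence of $V$ is precisely what forces the coefficients of $\Delta_{K_{n,m}}(t)$ to be linear in $m$.

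Then I would compute $\Delta_{K_{n,m}}(t)=\det(tV-V^{T})$ and normalize so that $\Delta(t^{-1})=\Delta(t)$ and $\Delta(1)=1$. By expanding the determinant along the row (or column) that contains the $m$-dependent entry, one gets a linear polynomial in $m$ whose coefficients are determinants of smaller submatrices sharing the same banded structure. These submatrices can be evaluated by a short induction on $n$, peeling off one pair of generators at a time from the $n$-twist block; the coefficients that come out should match those in the claimed formula after collecting like terms. The case $K_{-n,m}$ is completely analogous: reversing the sign of $n$ corresponds to reversing the orientation inside the $n$-twist region, which flips the sign of the relevant off-diagonal block of $V$, and this is exactly what produces the shift of exponents $(n-2,n-1,n,n+1)\mapsto (n-1,n,n+1,n+2)$ seen in the two formulas.

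The main obstacle will be setting up the Seifert surface and the Seifert matrix with the correct sign and orientation conventions, especially around the $(2m+1)$-half-twist box. Because $2m+1$ is odd, the Seifert algorithm produces a slightly different local picture than for an even number of half-twists, and this has to be tracked carefully so that the $m$-dependent entry of $V$ has the right sign. Once the matrix is correctly identified the remaining determinant computation, although lengthy, is purely mechanical, and one can even verify the answer for small values of $n$ and $m$ using a computer algebra system before writing up the general induction.
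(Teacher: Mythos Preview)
Your proposal is the same approach the paper takes: the authors state that the lemma ``is achieved by a direct calculation of the Alexander polynomial by using a Seifert matrix'' and then omit all details, so your outline in fact supplies more than the paper does. The only caveat is that everything you write is still a plan rather than a computation; the actual work lies in pinning down the Seifert matrix with correct signs and then carrying out the determinant expansion, which you have not yet done.
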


The proof of Lemma~\ref{lem:Alexander} is achieved by a direct calculation of the Alexander polynomial 
by using a Seifert matrix.  
We omit the proof of Lemma~\ref{lem:Alexander}. 

\begin{lem}\label{lem:SliceObst} 
Let $K$ be a slice knot. 
Then the constant term of $\Delta_K(t)$ is positive. 
\end{lem}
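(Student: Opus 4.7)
The plan is to exploit the Fox--Milnor factorization of the Alexander polynomial of a slice knot. Recall that if $K$ is slice, then there exists a Laurent polynomial $f(t)\in\Z[t,t^{-1}]$ such that
\[
\Delta_K(t)\doteq f(t)\,f(t^{-1}),
\]
where $\doteq$ denotes equality up to multiplication by a unit $\pm t^k$. Under the symmetric normalization $\Delta_K(t^{-1})=\Delta_K(t)$ and $\Delta_K(1)=1$, both sides are symmetric Laurent polynomials whose value at $t=1$ is $+1$, so the ambiguity $\pm t^k$ is forced to be $+1$; i.e.\ we may arrange
\[
\Delta_K(t)=f(t)\,f(t^{-1}).
\]

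Now I would compute the constant term directly. Writing $f(t)=\sum_{i} a_i t^i$ with $a_i\in\Z$, the product $f(t)f(t^{-1})$ has constant term
\[
\sum_{i} a_i^{2},
\]
which is a nonnegative integer. It is zero only if every $a_i$ vanishes, i.e.\ if $f\equiv 0$; but then $\Delta_K\equiv 0$, contradicting $\Delta_K(1)=1$. Hence the constant term of $\Delta_K(t)$ equals $\sum_i a_i^2>0$.

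The only genuinely substantive ingredient is the Fox--Milnor theorem, which is classical; the rest is bookkeeping, and the main subtlety to handle carefully is the normalization so that one may legitimately drop the $\pm t^k$ unit and conclude that the constant term of $\Delta_K(t)$ is literally the constant term of $f(t)f(t^{-1})$ rather than of some shift of it. With the symmetric normalization fixed in the statement just above the lemma, this subtlety is resolved as described.
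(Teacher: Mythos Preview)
Your proof is correct and follows essentially the same approach as the paper: invoke the Fox--Milnor factorization $\Delta_K(t)=f(t)\,f(t^{-1})$ and observe that the constant term is $\sum_i a_i^2$. You are in fact more careful than the paper's own argument, which simply states the factorization and the sum-of-squares conclusion without addressing the unit ambiguity or explicitly ruling out $f\equiv 0$ to obtain strict positivity.
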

\begin{proof}
Since $K$ is slice, there exists a polynomial $F(t) = \sum\limits_{i \ge 0} b_i t^i$ such that $\Delta_{K}(t) = F(t) F(t^{-1})$. 
This fact was proved by Fox and Milnor~\cite{FoxMilnor}, and independently by Terasaka~\cite{Terasaka}.  
Then we see that the constant term of $\Delta_K(t)$ is equal to $\sum\limits_{i \ge 0} b_{i}^2$. 
\end{proof}
Recall that a knot $K \subset S^3$ is said to be {\it ribbon} if $K$ is the boundary of a 
smoothly immersed disk $D^2 \looparrowright S^3$ with only ribbon singularities.
Clearly a ribbon knot is slice. 
Now we prove Theorem~\ref{thm:Omae}.

\begin{proof}[Proof of Theorem~\ref{thm:Omae}]
First, we determine the $n$-shake genus $g_s^n(K_{n,m})$.
Let $R(m)$ be the knot depicted in the right side of Figure~\ref{fig:OmaeAndRm}. 
Then we have the following. 

\begin{clm} \label{clm:diffeo}
$X_{K_{n,m}}(n) \approx X_{R(m)}(n)$.
In particular, $g_{s}^{n}(K_{n,m} )=0$.
\end{clm}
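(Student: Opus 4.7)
The plan is to establish the 4-manifold diffeomorphism $X_{K_{n,m}}(n) \approx X_{R(m)}(n)$ by explicit Kirby calculus, and then read off $g_s^n(K_{n,m}) = 0$ from the obvious bound available for $R(m)$.

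First I would draw the standard Kirby diagram for $X_{K_{n,m}}(n)$, namely the $n$-framed 2-handle attached along $K_{n,m}$ as shown on the left of Figure~\ref{fig:OmaeAndRm}. To transform this into the corresponding diagram of $X_{R(m)}(n)$, I would localize attention to the $2m+1$ half-twist region and its adjacent strands, and look for a sequence of handle slides of the 2-handle over itself (preceded, if needed, by the introduction of a canceling 1-/2-handle pair and followed by its cancellation) that converts $K_{n,m}$ into $R(m)$ while keeping the framing equal to $n$. This strategy parallels the special cases treated by Akbulut~\cite{Akbulut2, Akbulut1}, Lickorish~\cite{Lickorish}, and the third author~\cite{Omae1}, in each of which the relevant twist region is cleared by a small handle slide after an auxiliary cancellation, so the general $K_{n,m}$ ought to submit to the same pattern.

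The main obstacle I expect is to ensure that the sequence of moves actually produces a 4-dimensional diffeomorphism rather than only a diffeomorphism of boundaries. Handle slides and the introduction and cancellation of 1-/2-handle pairs preserve the 4-manifold up to diffeomorphism, so the plan succeeds as long as no blow-up or blow-down is required. Should a blow-up appear unavoidable, my fallback is to apply Lemma~\ref{lem:extend}: construct a boundary diffeomorphism $g\colon \partial X_{K_{n,m}}(n) \to \partial X_{R(m)}(n)$ by Kirby calculus, track the image of a 0-framed meridian $\mu$ of $K_{n,m}$, verify that $g(\mu)$ is a 0-framed unknot in the $X_{R(m)}(n)$ diagram, and check by further slides that the diagram consisting of the 2-handle along $R(m)$ together with the 1-handle along (dotted) $g(\mu)$ represents $D^4$. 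This is the same mechanism used in the proof of Theorem~\ref{thm:main}, so the verification should be quite manageable.

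Once the diffeomorphism $X_{K_{n,m}}(n) \approx X_{R(m)}(n)$ is established, the vanishing $g_s^n(K_{n,m}) = 0$ is immediate. Indeed, $R(m)$ will be visibly ribbon (or even the unknot) from its diagram on the right of Figure~\ref{fig:OmaeAndRm}, so it bounds a smoothly embedded disk $D$ in $D^4$. Capping off $D$ with the core of the 2-handle of $X_{R(m)}(n)$ produces a smoothly embedded 2-sphere in $X_{R(m)}(n)$ that intersects the cocore of the 2-handle once and therefore generates $H_2$. Transporting this sphere through the diffeomorphism yields a genus-zero generator of $H_2(X_{K_{n,m}}(n))$, which is exactly the defining condition for $g_s^n(K_{n,m}) = 0$.
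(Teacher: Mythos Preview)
Your plan matches the paper's approach: the authors establish $X_{K_{n,m}}(n)\approx X_{R(m)}(n)$ by an explicit sequence of Kirby moves (carried out in Figures~\ref{fig:OmaeKnot1} and~\ref{fig:OmaeKnot2}), then use that $R(m)$ is ribbon to conclude $g_s^n(K_{n,m})=g_s^n(R(m))=g_*(R(m))=0$. Your fallback via Lemma~\ref{lem:extend} turns out to be unnecessary here---the moves stay within the class of handle slides and cancellations, so no blow-ups appear---and $R(m)$ is genuinely ribbon rather than the unknot (its Alexander polynomial is $3-(t^2+t^{-2})$, cf.\ Remark~\ref{rem:Sakuma}), but otherwise your outline is on target.
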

\begin{proof}
We see that $X_{K_{n,m}}(n) \approx X_{R(m)}(n)$ by Kirby calculus as shown in Figures~\ref{fig:OmaeKnot1} and \ref{fig:OmaeKnot2}. 
Thus we have $g_{s}^{n}(X_{K_{n,m}}(n)) =g_{s}^{n}(X_{R(m)}(n))$. 
Since $R(m)$ is ribbon, $g_{s}^{n}(R(m))=g_{*}(R(m))=0$.
Therefore $g_{s}^{n}(X_{K_{n,m}}(n)) =0$.
\end{proof}

\begin{figure}[htb]
\includegraphics[width=.95\textwidth]{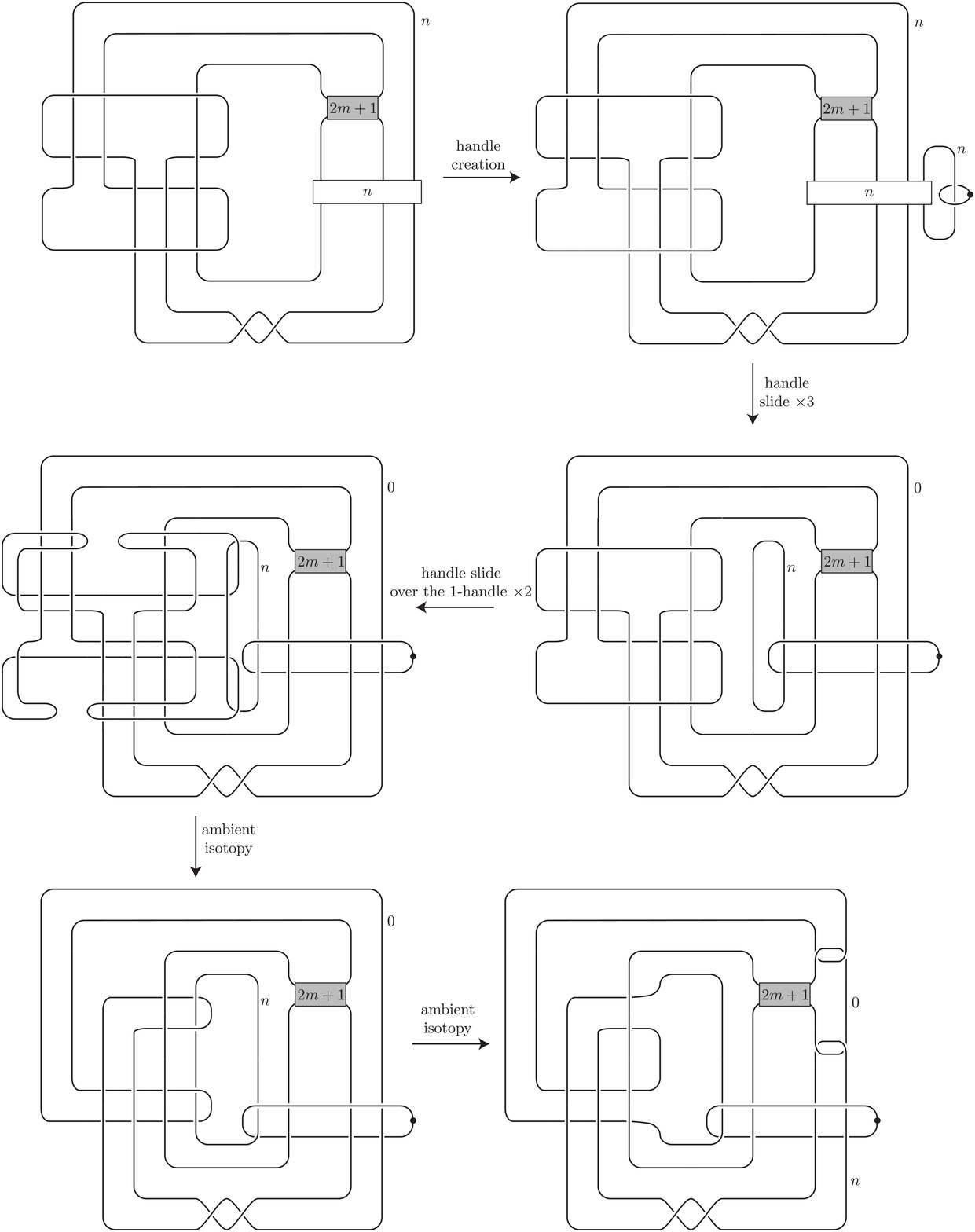}
\caption{$X_{K_{n,m}}(n) \approx X_{R(m)}(n)$. }\label{fig:OmaeKnot1}
\end{figure}
\begin{figure}[htb]
\includegraphics[width=.95\textwidth]{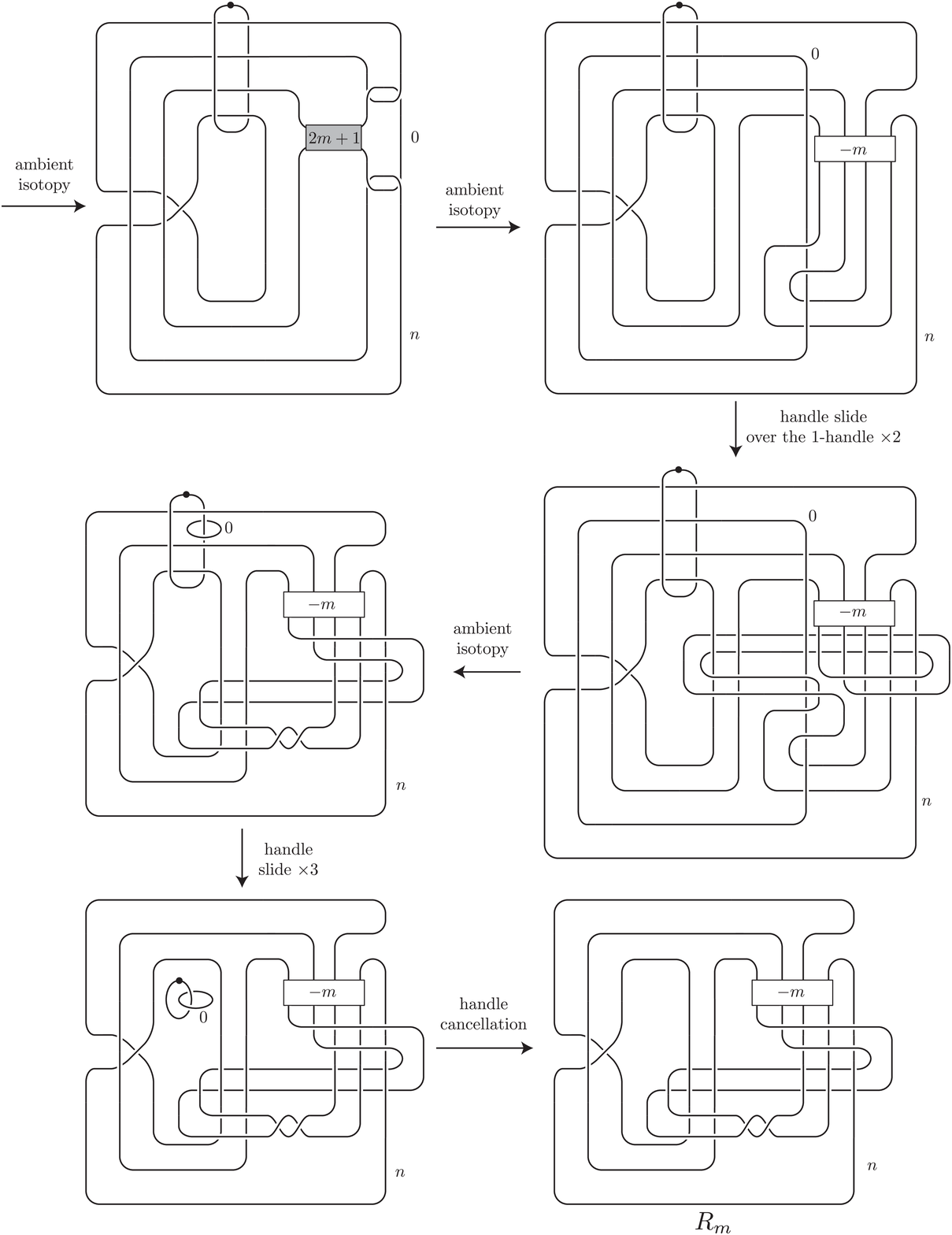}
\caption{$X_{K_{n,m}}(n) \approx X_{R(m)}(n)$. (continued)}\label{fig:OmaeKnot2}
\end{figure}

Next we determine the $4$-ball genus $g_*(K_{n,m})$ for $n \ne 0$ and $m \ge 0$.

\begin{clm}\label{clm:4-genus}
For integers $n \ne 0$ and $m \ge 0$, 
$g_{*}(K_{0,m})=0 \text{ and } g_{*}(K_{n,m} )=1$. 
\end{clm}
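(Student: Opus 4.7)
I would prove Claim~\ref{clm:4-genus} by separately bounding $g_*(K_{n,m})$ from above and from below. The upper bound $g_*(K_{n,m})\le 1$ (and the equality $g_*(K_{0,m})=0$) I plan to read directly off the diagram in Figure~\ref{fig:OmaeAndRm}. For $n=0$ the twist box degenerates and the resulting diagram should admit an explicit ribbon disk, analogous to the ribbon disk for $R(m)$ already used in the proof of Claim~\ref{clm:diffeo}, giving $g_*(K_{0,m})=0$. For $n\ne 0$ I would exhibit a single crossing change turning $K_{n,m}$ into an unknot (i.e.\ show $K_{n,m}$ has unknotting number one); the standard genus-one saddle cobordism associated to a crossing change then bounds a properly embedded surface of genus at most one in $D^4$, yielding $g_*(K_{n,m})\le 1$.

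The lower bound $g_*(K_{n,m})\ge 1$ is where the real content sits, and my plan is to apply the Fox--Milnor obstruction (Lemma~\ref{lem:SliceObst}) together with the explicit Alexander polynomial computed in Lemma~\ref{lem:Alexander}. Inspecting either expression for $\Delta_{K_{\pm n,m}}(t)$, the constant term (the coefficient of $t^0$) is $-(1+6m)$, which is strictly negative for every $m\ge 0$. By Lemma~\ref{lem:SliceObst}, a slice knot must have strictly positive constant term in its Alexander polynomial, so $K_{n,m}$ cannot be slice when $n\ne 0$. Hence $g_*(K_{n,m})\ge 1$, and combined with the upper bound this forces $g_*(K_{n,m})=1$.

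The main obstacle I expect is the diagrammatic upper bound rather than the algebraic lower bound: one must identify the correct crossing in the family $K_{n,m}$ whose change uniformly unknots the diagram for every $n\ne 0$ and every $m\ge 0$, and, separately, pin down an unambiguous ribbon disk when $n=0$. Once this diagrammatic picture is in hand, the remaining argument is essentially mechanical, amounting to extracting the coefficient of $t^0$ from the formula of Lemma~\ref{lem:Alexander} and invoking Fox--Milnor.
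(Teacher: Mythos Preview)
Your plan is essentially the paper's proof: the lower bound comes from Lemma~\ref{lem:SliceObst} applied to the Alexander polynomial of Lemma~\ref{lem:Alexander}, the case $n=0$ is handled by observing that $K_{0,m}$ is ribbon, and the upper bound $g_*(K_{n,m})\le 1$ is read off the diagram.

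Two small corrections are worth flagging. First, your claim that the constant term of $\Delta_{K_{\pm n,m}}(t)$ equals $-(1+6m)$ is only literally true for $|n|\ge 3$; when $|n|\in\{1,2\}$ some of the terms $t^{\pm(n-2)}$, $t^{\pm(n-1)}$ (respectively $t^{\pm(n+1)}$, $t^{\pm(n+2)}$) collapse into degree zero and must be added in. A direct check shows the constant term remains negative in those cases as well, so the conclusion survives, but the computation is not quite as uniform as you suggest. Second, for the upper bound the paper does not assert that $K_{n,m}$ has unknotting number one; it uses the weaker observation that $K_{n,m}$ can be transformed into the unknot by \emph{two band surgeries}, which is immediately visible from the diagram and already yields a genus-one cobordism to the unknot. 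This bypasses exactly the obstacle you anticipate (locating a single unknotting crossing valid for all $n$ and $m$), so you may find it simpler to argue that way.
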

\begin{proof}
We see that $K_{0,m}$ is ribbon. 
Therefore $g_{*}(K_{0,m})=0$. 
Since $\Delta_{K_{n,m}}(0) < 0$ by Lemma~\ref{lem:Alexander}, 
we see that $g_*(K_{n,m}) \ge 1$ by Lemma~\ref{lem:SliceObst}. 
Since $K_{n,m}$ can be transformed into the unknot by two band surgeries, 
we see that  $g_{*}(K_{n,m}) \le 1$. 
Thus we have $g_*(K_{n,m}) = 1$ for $n \ne 0$ and $m \ge 0$. 
\end{proof}
Now we have completed the proof of Theorem~\ref{thm:Omae}. 
\end{proof}

\begin{rem}\label{rem:Sakuma}
We can check that $\Delta_{R(m)}(t) =3 - (t^2 +t^{-2})$.
Let $\omega$ be $n$-th root of the unity. 
Then $\Delta_{K_{n,m}}(\omega) = \Delta_{R(m)}(\omega)=3-(\omega ^2 + \omega^{-2})$. 
In fact, let $K_1$ and $K_2$ be knots such that $\partial X_{K_1}(n) \approx \partial X_{K_2}(n)$.
Then $H_{1} (\Sigma_{n}(K)) \cong H_{1} (\Sigma_{n}(K'))$, where $\Sigma_n(K)$ denotes the $n$-fold cyclic branched covering space of $S^3$ branched along $K$. 
In particular, $\Delta_{K}(\omega) = \Delta_{K'}(\omega)$. 
\end{rem}

\end{document}